\newtheorem{thm}{Theorem}[section]
\newtheorem{cor}{Corollary}[section]
\newtheorem{lem}{Lemma}[section]
\newtheorem{prop}{Proposition}[section]
\theoremstyle{definition}
\newtheorem{defn}{Definition}[section]
\newtheorem{rem}{Remark}[section]
\begin{document}


\title{
\begin{flushleft}
 \noindent {\small {\it  THE INTERNATIONAL CONFERENCE ON ALGEBRA, DISCRETE MATHEMATICS AND APPLICATIONS \textbf{(ICADMA22)},
29--30 September 2022, Faculty of Sciences A\"in Chock, Casablanca, Morocco. \vspace{0.4cm} \hrule \hrule \vspace{0.8cm}
 }}
\end{flushleft}
\Large\bf \uppercase{On $p$-groups of maximal class}}

\author{Noureddine Snanou\thanks{Faculty of Sciences Dhar El Mahraz-Fez,  Sidi Mohamed Ben Abdellah University , E-mail: noureddine.snanou@usmba.ac.ma}}

\maketitle


{\bf Abstract.} \vskip2mm
Recall that a $p$-group of order $p^ {n} >p^ {3} $ is of maximal
class, if its nilpotency class is $n-1$. In this paper, we study the $p$-groups of maximal class. Furthermore, we introduce a subgroup of a $p$-group of maximal class called the fundamental subgroup. This group plays a fundamental role in the development of the general theory of $p$-groups of maximal class. As an application, we study some special class of finite $p$-groups of maximal class and exponent $p$.
\vspace{0.2cm}

{\bf Keywords:}
$p$-groups of maximal class, fundamental subgroup, CGZ-group.

\section{\bf Introduction}
A group of order $p^{n}>p^{3}$ and nilpotency class $n-1$, is said
to be of maximal class. These groups have been studied by various
authors \cite{C. R. I, W.A52}. But the main reference in the theory of $p$-groups of maximal class
is Blackburn's paper \cite{Bla1}. Other famous references for these $p$-groups are \cite{Ber1, GM02, Hup67}. The $p$-groups of maximal class with
an abelian maximal subgroup were completely classified by Wiman in \cite{W.A46}. More recently, it is proved that the finite $p$-group of maximal class can be determined by centralizers of some subgroups. For example, the theorem of
Suzuki ( \cite[Theorem III.14.23]{Hup67}, \cite[Proposition 1.8]{Ber1}) shows that a finite $p$-group $G$ is of maximal class if there
is a self centralizing subgroup of order $p^{2}$, and in \cite[Proposition 10.17]{Ber1}, it is showed that if $G$ is a finite $p$-group,
$B\leq G$ is a nonabelian subgroup of order $p^{3}$ such that $C_{G}(B)<B$, then $G$ is of maximal class. Some results about these groups that we present in this paper can be found in \cite{Ber1, Ber2, Ber3}. These results play a fundamental role in finite
$p$-group theory. Throughout this paper, we use the standard notation,
such as in \cite{Ber1}.

The paper is organized as follows: In the second section, we recall
some preliminaries of nilpotent groups. The third section covers the
basic material about $p$-groups of maximal class. In section $4$, we
introduce a characteristic subgroup of a $p$-group of maximal class called the
fundamental subgroup. This group plays a fundamental role in the
development of the general theory of $p$-groups of maximal class. In section 5, which is the end section of this work, we deal with some special class
of finite $p$-groups of maximal class and exponent $p$, namely $CGZ$-groups. More precisely, we prove that any $CGZ$-group of order $p^{n}$ and exponent $p$ (where $3\leq n\leq p$) admits a unique characteristic elementary abelian subgroup of index $p$.

\section{\bf Preliminaries}

 Let $G$ be a group and $H$ be a subgroup of $G$. An element $g\in G$ normalizes $H$ if $gHg^{-1}= H$. We call
$N_{G}(H)=\{g\in G | gHg^{-1}=H\}$ the normalizer of $H$ in $G$. An
element $g\in G$ centralizes $H$ if $ghg^{-1}=h$ for any $h\in H$.
We call $C_{G}(H) =\{g\in G | ghg^{-1}=h, \forall h\in H\}$ the
centralizer of $H$ in $G$. If $H=G$, then $Z(G)=C_{G}(G)$ is called
the center of $G$.

 Note that for a subgroup $H$ of $G$, $C_{G}(H)=G$
if and only if $H\leq Z(G)$. It is easy to see that, for any
subgroup $H$, $N_{G}(H)$ and $C_{G}(H)$ are subgroups of G with
$C_{G}(H)\leq N_{G}(H)$, and that if $H\leq G$ then $H\leq
N_{G}(H)$. The N/C-theorem \cite[Introduction, Proposition
12]{Ber1} asserts that if $H\leq G$, then the quotient group
$N_{G}(H)/C_{G}(H)$ is isomorphic to a subgroup of $Aut(H)$. In
particular, $G/Z(G)$ is isomorphic to a subgroup of $Aut(G)$.

 Let $G$ be a group. Set $Z_{0}(G)=\{1\},
Z_{1}(G)=Z(G)$. Suppose that $Z_{i}(G)$ has been defined for $i\leq
k$. Define $Z_{k+1}(G)$ as follows: $
Z_{k+1}(G)/Z_{k}(G)=Z(G/Z_{k}(G))$. The chain $\{1\}=Z_{0}(G)\leq
Z_{1}(G)\leq...\leq Z_{k}(G)\leq...$ is said to be the upper central
series of $G$. All members of that series are characteristic in $G$.

\begin{defn} For elements $x,y\in G$, their commutator $x^{-1}y^{-1}xy$ is written
as $[x,y]$. If $X,Y\subseteq G$, then $[X, Y]$ is the subgroup
generated by all commutators $[x,y]$ with $x\in X, y\in Y$.
\end{defn}

The lower central series $G=K_{1}(G)\geq K_{1}(G)\geq
K_{2}(G)\geq...$ of $G$ is defined as follows: $K_{1}(G)=G,
K_{i+1}(G)=[K_{i}(G),G], i>0$. All members of that series are
characteristic in $G$. We have $K_{i}(G)/K_{i+1}(G)\leq
Z(G/K_{i}(G))$. If $H\leq G$, then $K_{i}(H)\leq K_{i}(G)$ for all
$i$.

Since $[y,x]=[x,y]^{-1}$, we have $[Y,X]=[X,Y]$. We write
$[G,G]=G'$, the subgroup $G'$ is called the commutator (or derived
subgroup) of $G$. We also write $G^{0}=G$, $G'=G^{1}$. Then the
subgroup $G^{i+1}=[G^{i},G^{i}]$ is called the (i+1)th derived
subgroup of $G$, $i\geq 0$. The chain $G=G^{0}\geq G^{1}\geq ...\geq
G^{n}\geq...$ is called the derived series of $G$. All members of
this series are characteristic in $G$ and all factors
$G^{i}/G^{i+1}$ are abelian. The group $G$ is said to be solvable if
$G^{n}={1}$ for some $n$.

\begin{defn}
A group $G$ is said to be nilpotent if the upper central series of
$G$ contains $G$. In other words, $G$ is nilpotent of class $c$, if
$Z_{c}(G)=G$ but $Z_{c-1}(G)<G$, we write $c=cl(G)$. In particular,
the class of the identity group is $0$ and the class of a
nonidentity abelian group is $1$.
\end{defn}

\begin{lem} The following are equivalent:
\begin{enumerate}
  \item $G$ is nilpotent of class $c$.
  \item $G=K_{1}(G)>K_{2}(G)>K_{3}(G)>...>K_{c+1}(G)=\langle1\rangle$.
  \item $\langle1\rangle=Z_{0}(G)<Z_{1}(G)<...<Z_{c-1}(G)<Z_{c}(G)=G$.
\end{enumerate}
\end{lem}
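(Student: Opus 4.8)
The plan is to isolate a single ``duality'' between the two series and then regard (2) and (3) as strengthenings of the definition of nilpotency of class $c$. Concretely, assuming $Z_c(G)=G$ and $Z_{c-1}(G)<G$, I aim to establish
\[
K_{i+1}(G)\leq Z_{c-i}(G)\quad\text{and}\quad K_{c+1-i}(G)\leq Z_i(G)\qquad(0\leq i\leq c),
\]
and, more usefully, the general equivalence ``$Z_m(G)=G\iff K_{m+1}(G)=\langle1\rangle$'' for arbitrary $m$, since this is the only substantive ingredient; the reverse implications in the lemma will then be bookkeeping.

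First I would record the two facts that power the inductions: the definition $Z_{k+1}(G)/Z_k(G)=Z(G/Z_k(G))$ gives $[Z_{k+1}(G),G]\leq Z_k(G)$, while $K_{k+1}(G)=[K_k(G),G]$ is already in usable form. An upward induction on $i$, with base case $K_1(G)=G=Z_m(G)$ and inductive step $K_{i+1}(G)=[K_i(G),G]\leq [Z_{m-i+1}(G),G]\leq Z_{m-i}(G)$, then yields $K_{i+1}(G)\leq Z_{m-i}(G)$, hence $K_{m+1}(G)\leq Z_0(G)=\langle1\rangle$ whenever $Z_m(G)=G$. Dually, a downward induction on $i$, with base case $K_{m+1}(G)=\langle1\rangle=Z_0(G)$ and inductive step deducing from $K_{m+1-i}(G)\leq Z_i(G)$ that $[K_{m-i}(G),G]=K_{m+1-i}(G)\leq Z_i(G)$, so the image of $K_{m-i}(G)$ in $G/Z_i(G)$ is central and therefore $K_{m-i}(G)\leq Z_{i+1}(G)$, yields $G=K_1(G)\leq Z_m(G)$ whenever $K_{m+1}(G)=\langle1\rangle$. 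I would state this equivalence once, in the parameter $m$, so that it can be reused in both directions.

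Next comes strictness. For the upper series: if $Z_j(G)=Z_{j+1}(G)$ for some $j<c$, then $Z(G/Z_j(G))=\langle1\rangle$, and chasing the definition forces $Z_{j+1}(G)=Z_{j+2}(G)=\cdots=Z_c(G)=G$, hence $Z_j(G)=G$, contradicting $Z_{c-1}(G)<G$; thus $Z_0(G)<Z_1(G)<\cdots<Z_c(G)=G$, which is (3). For the lower series: the equivalence already gives $K_{c+1}(G)=\langle1\rangle$, and if $K_i(G)=K_{i+1}(G)$ for some $i\leq c$, then $K_i(G)=[K_i(G),G]=K_{i+1}(G)=\cdots=K_{c+1}(G)=\langle1\rangle$, so the equivalence with $m=i-1$ gives $Z_{i-1}(G)=G$ with $i-1\leq c-1$, again contradicting $Z_{c-1}(G)<G$; hence $G=K_1(G)>K_2(G)>\cdots>K_{c+1}(G)=\langle1\rangle$, which is (2). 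Finally (2)$\Rightarrow$(1) and (3)$\Rightarrow$(1) follow from the equivalence together with the strict inequalities $K_c(G)>\langle1\rangle$ (which forces $Z_{c-1}(G)\neq G$ via the equivalence with $m=c-1$) and $Z_{c-1}(G)<G$, respectively.

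I expect the main obstacle to be organizational rather than mathematical: keeping the off-by-one between the indexing of $Z_{k+1}/Z_k$ and of $K_k/K_{k+1}$ consistent across the upward and downward inductions, and phrasing the ``no premature stabilization'' arguments so that they invoke the general equivalence without circularity. The underlying manipulations---commutator containments and passage to the quotient by $Z_k(G)$---are routine.
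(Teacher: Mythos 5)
Your argument is correct: the two inductions establishing the equivalence $Z_m(G)=G\iff K_{m+1}(G)=\langle1\rangle$ are carried out with the right index bookkeeping, and the strictness and reverse implications follow as you describe. Note, however, that the paper states this lemma without any proof (it is a standard preliminary fact), so there is no argument in the paper to compare against; your route via the mutual containments $K_{i+1}(G)\leq Z_{m-i}(G)$ and $K_{m+1-i}(G)\leq Z_i(G)$ is the classical textbook proof of this equivalence.
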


\begin{thm}\label{NIL}
Let $G$ be a p-group of order $p^{m}\geq p^{2}$. Then:
\begin{enumerate}
\item $G$ is nilpotent of class at most $m-1$.
\item If $G$ has nilpotency class $c$ then $|G:Z_{c-1}(G)|\geq p^{2}$.
\item The maximal subgroups of $G$ are normal and of index $p$.
\item $|G:G'|\geq p^{2}$.
\end{enumerate}
\end{thm}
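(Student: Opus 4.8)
The plan is to prove the four items in the order: nilpotency with a crude class bound, then (2), then the sharp form of (1), then (3), then (4); the common engine is that a nontrivial finite $p$-group has nontrivial centre. For that I would write the class equation $|G| = |Z(G)| + \sum_i |G:C_G(x_i)|$ over representatives $x_i$ of the non-central conjugacy classes: each $|G:C_G(x_i)|$ is a positive power of $p$ and $p\mid|G|$, so $p\mid|Z(G)|$ and $Z(G)\neq\langle1\rangle$. Applying this to each quotient $G/Z_i(G)$ shows that whenever $Z_i(G)<G$ one has $Z_{i+1}(G)>Z_i(G)$; finiteness then forces $Z_c(G)=G$ for some $c$, so $G$ is nilpotent, and the strictly increasing chain $\langle1\rangle=Z_0(G)<\dots<Z_c(G)=G$ from the preceding lemma already gives $c\leq m$.

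For (2) the key input is the elementary fact that if $G/Z(G)$ is cyclic then $G$ is abelian (write each element as $g^i z$ with $z$ central and check commutativity). Put $c=cl(G)$; since $|G|\geq p^2$ we have $c\geq1$, and for $c=1$ we get $Z_{c-1}(G)=\langle1\rangle$, so $|G:Z_{c-1}(G)|=|G|\geq p^2$. For $c\geq2$, set $\bar G=G/Z_{c-2}(G)$; by the definition of the upper central series $Z(\bar G)=Z_{c-1}(G)/Z_{c-2}(G)$, hence $\bar G/Z(\bar G)\cong G/Z_{c-1}(G)$, which is nontrivial because $Z_{c-1}(G)<G$. If it had order $p$ it would be cyclic, making $\bar G$ abelian, i.e. $Z_{c-1}(G)=G$ — impossible. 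Hence $|G:Z_{c-1}(G)|\geq p^2$. Now (1) sharpens: the chain $Z_0(G)<\dots<Z_{c-1}(G)$ gives $|Z_{c-1}(G)|\geq p^{c-1}$, so $p^m=|G:Z_{c-1}(G)|\,|Z_{c-1}(G)|\geq p^{c+1}$ and $c\leq m-1$.

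For (3) I would establish the ``normalisers grow'' property of nilpotent groups: if $H<G$, let $i$ be largest with $Z_i(G)\leq H$ (it exists, with $0\leq i<c$), and pick $x\in Z_{i+1}(G)\setminus H$; then $[x,h]\in[Z_{i+1}(G),G]\leq Z_i(G)\leq H$ for all $h\in H$, whence $x^{-1}Hx\subseteq H$, so $x\in N_G(H)\setminus H$. In particular a maximal subgroup $M$ satisfies $N_G(M)=G$, hence $M\trianglelefteq G$; then $G/M$ has no proper nontrivial subgroups, so $|G/M|$ is prime, and being a $p$-group it equals $p$.

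For (4): if $G$ is abelian then $G'=\langle1\rangle$ and $|G:G'|=|G|\geq p^2$. If not, $G'\neq\langle1\rangle$ and $G'<G$ by nilpotency, while $G'\leq M$ for every maximal $M$ (since $G/M$ has order $p$, hence is abelian). Were $|G:G'|=p$, then $G'$ would itself be maximal and, lying in every maximal subgroup, the unique one; then every proper subgroup of $G$ lies in $G'$, so any $g\notin G'$ generates $G$, forcing $G$ cyclic and $G'=\langle1\rangle$, a contradiction. Thus $|G:G'|\neq p$, and since $G'<G$ also $|G:G'|\neq1$, so $|G:G'|\geq p^2$. The substantive content is concentrated in the two short ``cyclic quotient'' lemmas used for (2) and the normaliser-growth lemma used for (3); everything else is bookkeeping with the central series, so I expect those small lemmas to be the only real obstacle.
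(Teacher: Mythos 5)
The paper states Theorem \ref{NIL} as background and gives no proof of it, so there is nothing internal to compare against; judged on its own, your proof is correct and complete. You use exactly the classical chain of arguments: the class equation to get $Z(G)\neq\langle1\rangle$ for a nontrivial $p$-group, applied to the quotients $G/Z_i(G)$ to force the upper central series to reach $G$; the ``$G/Z(G)$ cyclic $\Rightarrow$ $G$ abelian'' lemma applied to $G/Z_{c-2}(G)$ to get (2), which then sharpens the crude bound $c\leq m$ to $c\leq m-1$; the normaliser-growth property via the largest $i$ with $Z_i(G)\leq H$ to get (3); and the observation that $G'$ lies in every maximal subgroup, so $|G:G'|=p$ would make $G'$ the unique maximal subgroup and $G$ cyclic, to get (4). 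Each step checks out (in particular $[Z_{i+1}(G),G]\leq Z_i(G)$ is exactly the definition of the upper central series, so your normaliser argument is sound), and the logical ordering --- proving (2) before finishing (1) --- is handled correctly. This is the standard proof one finds in Huppert or Berkovich, which is presumably why the author omitted it.
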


\begin{cor}\label{CORNIL} Let $G$ be a p-group and let $N$ be a normal subgroup of $G$
of index $p^{i}\geq p^{2}$. Then $K_{i}(G)\leq N$.
\end{cor}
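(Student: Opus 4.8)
The plan is to reduce the statement to Theorem~\ref{NIL}(1) by passing to the quotient $\bar G = G/N$. Since $N$ is normal in $G$ of index $p^{i}$ with $i\geq 2$, the quotient $\bar G$ is a $p$-group of order $p^{i}\geq p^{2}$, so Theorem~\ref{NIL}(1) applies and $\bar G$ is nilpotent of some class $c\leq i-1$. By the equivalence in the preceding Lemma (nilpotency of class $c$ is the same as $K_{c+1}(\bar G)=\langle 1\rangle$), together with the fact that the lower central series is a descending chain and $c+1\leq i$, we get $K_{i}(\bar G)\leq K_{c+1}(\bar G)=\langle 1\rangle$, hence $K_{i}(\bar G)=\langle 1\rangle$.

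The second ingredient is the compatibility of the lower central series with the canonical surjection $\pi\colon G\to \bar G$, namely $\pi\bigl(K_{j}(G)\bigr)=K_{j}(\bar G)$ for every $j\geq 1$. I would establish this by induction on $j$: the case $j=1$ is just the surjectivity of $\pi$; for the inductive step, $K_{j+1}(G)=[K_{j}(G),G]$ is generated by the commutators $[x,g]$ with $x\in K_{j}(G)$, $g\in G$, and since $\pi$ is a surjective homomorphism with $\pi([x,g])=[\pi(x),\pi(g)]$, the image $\pi\bigl(K_{j+1}(G)\bigr)$ is the subgroup of $\bar G$ generated by all $[\bar x,\bar g]$ with $\bar x\in\pi(K_{j}(G))=K_{j}(\bar G)$ and $\bar g\in\bar G$, which is precisely $[K_{j}(\bar G),\bar G]=K_{j+1}(\bar G)$.

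Combining the two steps at the level $j=i$ gives $K_{i}(G)N/N=\pi\bigl(K_{i}(G)\bigr)=K_{i}(\bar G)=\langle 1\rangle$, and therefore $K_{i}(G)\leq N$, which is exactly the assertion of the corollary.

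The only real content beyond bookkeeping is the quotient-compatibility $\pi\bigl(K_{j}(G)\bigr)=K_{j}(\bar G)$; everything else is an immediate invocation of Theorem~\ref{NIL}. That compatibility is elementary (a short induction, as above) but it is the step that does the work, so I would isolate it cleanly — perhaps as a brief remark that the lower central series is preserved under surjective homomorphisms — after which the corollary follows in two lines. I do not anticipate any genuine obstacle here.
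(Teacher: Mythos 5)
Your proposal is correct and follows essentially the same route as the paper: both pass to $G/N$, apply Theorem~\ref{NIL}(1) to get class $\leq i-1$ and hence $K_{i}(G/N)=\overline{1}$, and then use $K_{i}(G/N)=K_{i}(G)N/N$ to conclude $K_{i}(G)\leq N$. The only difference is that you spell out the proof of the quotient-compatibility of the lower central series, which the paper simply cites as a known fact.
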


\begin{proof} The group $G/N$ has order $p^{i}\geq p^{2}$. It follows from
part $(1)$ of the Theorem \ref{NIL} that $G/N$ has class $\leq i-1$
and consequently $K_{i}(G/N)=\overline{1}$. Since
$K_{i}(G/N)=K_{i}(G)N/N$, this proves that $K_{i}(G)\leq N$.
\end{proof}

\section{$p$-groups of maximal class }

Recall that a group of order $p^{m}$ is of maximal class, if
$cl(G)=m-1>1$. Of course, any group of order $p^{2}$ and any
nonabelian p-group of order $p^{3}$ have maximal class. If $G$ is a
group of maximal class and of order $p^{m}$, then the lower and
upper central series of $G$ are:
$$G=K_{1}(G)>K_{2}(G)>K_{3}(G)>...>K_{m}(G)=\langle1\rangle,$$
and
$$\langle1\rangle=Z_{0}(G)<Z_{1}(G)<...<Z_{m-2}(G)<Z_{m-1}(G)=G.$$
The two series are the same, and all members of these series are
characteristic in $G$. The sections are all of order $p$, except the first which has order $p^{2}
$ and is not cyclic. Clearly $G/K_{i}(G)$ is also of maximal class for $4\leq i\leq m-1$.

\begin{prop}\label{maxi}
Let $G$ be a $p$-group of maximal class and order $p^{m}$. Then:
\begin{enumerate}
  \item $|G:G'|=p^{2}$, $|Z(G)|=p$ and $|K_{i}(G):K_{i+1}(G)|=p$ for $2\leq i\leq m-1$.

  \item \label{maxim}If $1\leq i<m-1$, then $G$ has only one normal subgroup of order
$p^{i}$. More precisely, if $N$ is a normal subgroup of $G$ of index
$p^{i}\geq p^{2}$, then $N=K_{i}(G)$.
  \item $G$ has $p+1$ maximal subgroups.
\end{enumerate}
\end{prop}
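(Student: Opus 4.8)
The plan is to prove part (1) by a counting argument along the two central series, then obtain part (2) almost immediately from Corollary~\ref{CORNIL}, and finally read part (3) off the structure of $G/G'$.

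For (1), I would start from the fact that $cl(G)=m-1$, so that the lower central series $G=K_1(G)>K_2(G)>\dots>K_m(G)=\langle1\rangle$ consists of $m-1$ strict inclusions and each factor $K_j(G)/K_{j+1}(G)$ is a nontrivial $p$-group, say of order $p^{v_j}$ with $v_j\ge1$; since $\sum_{j=1}^{m-1}v_j=m$, exactly one $v_j$ equals $2$ and all the others equal $1$. Theorem~\ref{NIL}(4) gives $v_1=\log_p|G:G'|\ge2$, which pins the jump to $j=1$: thus $|G:G'|=p^2$, $|K_j(G):K_{j+1}(G)|=p$ for $2\le j\le m-1$, and in particular $|K_i(G)|=p^{\,m-i}$ for $2\le i\le m$. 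Running the same argument on the upper central series $\langle1\rangle=Z_0(G)<Z_1(G)<\dots<Z_{m-1}(G)=G$, and using Theorem~\ref{NIL}(2), namely $|G:Z_{m-2}(G)|\ge p^2$, to place the unique factor of order $p^2$ at the \emph{top} instead, I get $|Z(G)|=|Z_1(G)|=p$.

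For (2), let $N\trianglelefteq G$ with $|G:N|=p^i$ where $2\le i\le m$. Since the index is at least $p^2$, Corollary~\ref{CORNIL} gives $K_i(G)\le N$; but $|K_i(G)|=p^{\,m-i}=|N|$ by (1), so $K_i(G)=N$, which is the ``more precisely'' claim and also shows there is at most one such normal subgroup. Dualising, if $1\le i<m-1$ then a normal subgroup of order $p^i$ has index $p^{\,m-i}$ with $m-i\ge2$, so it must equal $K_{m-i}(G)$ and hence is unique. For (3), I would note that by Theorem~\ref{NIL}(3) every maximal subgroup $M$ is normal of index $p$, so $G/M$ has order $p$ and therefore $G'\le M$; conversely every subgroup of index $p$ is maximal. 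Hence the maximal subgroups of $G$ are in bijection with the index-$p$ subgroups of $G/G'$, which by (1) is abelian of order $p^2$. Since $cl(G)=m-1>1$, the group $G$ is non-abelian, so $G/G'$ is not cyclic (a $p$-group with cyclic commutator quotient is cyclic), whence $G/G'\cong C_p\times C_p$; this group has exactly $(p^2-1)/(p-1)=p+1$ subgroups of index $p$, so $G$ has $p+1$ maximal subgroups.

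The main obstacle is the bookkeeping in (1): one has to combine the crude information that every central-series factor is a nontrivial $p$-group with the two sharp inequalities of Theorem~\ref{NIL} to see that the single factor of order $p^2$ sits at the bottom of the lower series and at the top of the upper series. Once the orders $|K_i(G)|=p^{\,m-i}$ have been pinned down, parts (2) and (3) follow formally.
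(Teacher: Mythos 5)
Your proposal is correct and follows essentially the same route as the paper: part (1) by comparing the product of the central-series factor orders with $p^m$ against the bounds from Theorem~\ref{NIL}, part (2) directly from Corollary~\ref{CORNIL} together with $|G:K_i(G)|=p^i$, and part (3) by counting the $p+1$ index-$p$ subgroups of the elementary abelian quotient $G/G'=G/\Phi(G)$ of order $p^2$. You additionally spell out the upper-central-series argument for $|Z(G)|=p$, which the paper's proof leaves implicit.
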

\begin{proof}
(1) We have that
$$p^{m}=|G|=|G:G'|\prod_{i=2}^{m-1}|K_{i}(G):K_{i+1}(G)|$$. Now it
suffices to observe that $|G:G'|\geq p^{2}$, by theorem \ref{NIL},
and that $|K_{i}(G):K_{i+1}(G)|\geq p$ for $2\leq i\leq
m-1$. \\

(2) Let $N$ be any normal subgroup of $G$ of index $p^{i}$ with
$0\leq i\leq m$.If $i=0$ or $1$ then $N=K_{1}(G)$ or $N$ is maximal
in $G$. Otherwise $i\geq 2$ and $K_{i}(G)\leq N$ by Corollary
\ref{CORNIL}. Since $|G:K_{i}(G)|=p^{i}$, we conclude that
$N=K_{i}(G)$. \\

(3) As $G/K_{2}(G)$ has exponent $p$, the Frattini subgroup
$\Phi(G)=K_{2}(G)$. Hence $G/\Phi(G)$ has order $p^{2}$ and can be
regarded as a vector space over $\mathbb{F}_{p}$ of dimension $2$.
This vector space has $p+1$ subspaces of dimension $1$ and these
correspond to the maximal subgroups of $G$.
\end{proof}

\begin{rem}\
\begin{enumerate}
\item (The converse of the proposition \ref{maxi}(2)) If $G$ is a noncyclic group of order
$p^{m} > p^{2}$ containing only one normal subgroup of order $p^{i}$
for each $1\leq i<m-1$, then it is of maximal class \cite[Lemma 9.1]{Ber1}.
\item A p-group $G$ of order $p^{n}$ has exactly $n+p-1$ nontrivial normal subgroups if and only if it is of maximal class \cite[exercie 0.30]{Ber1}.
\end{enumerate}
\end{rem}

Suppose that a p-group $G$ has only one normal subgroup $T$ of index
$\geq p^{p + 1}$. If $G/T$ is of maximal class so is $G$
(see \cite[Theorem 12.9]{Ber1}). Conversely, let $G$ be a p-group of
maximal class. If $N$ is a normal subgroup of $G$ of index $\geq
p^{2}$, then $G/N$ has also maximal class. Indeed, since the class
of $G/K_{i}(G)$ is $i-1$ Whenever $2\leq i\leq m$, then the result
follows immediately from proposition \ref{maxi}.

\begin{prop} Let $G$ be a nonabelian group of order $p^{m} > p^{p}$. If $G$ contains only
one normal subgroup of index $p^{k}$ for any $k\in\{2, . . ., p +
1\} $, then it is of maximal class.
\end{prop}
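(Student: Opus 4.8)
The plan is to reduce to a quotient of order $p^{p+1}$: I will show that it is of maximal class, and then quote the theorem recalled just before the statement.

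First I would fix the top of $G$. Being nonabelian, $G$ is noncyclic, so $G/G'$ is a noncyclic abelian $p$-group; if its order were at least $p^{3}$ it would contain at least two subgroups of index $p^{2}$, and these would pull back to two distinct normal subgroups of $G$ of index $p^{2}$, contradicting the hypothesis for $k=2$. Hence $|G:G'|=p^{2}$, so $\Phi(G)=G'$, the group $G$ has $p+1$ maximal subgroups, and $|G'|=p^{m-2}$. For $2\leq k\leq p+1$ let $N_{k}$ be the unique normal subgroup of $G$ of index $p^{k}$ (it exists, since $p$-groups have normal subgroups of every index); in particular $N_{2}=G'$.

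Next I would study $T:=N_{p+1}$, a normal subgroup with $|G/T|=p^{p+1}>p^{2}$. The quotient $G/T$ is nonabelian, hence noncyclic: since $|G'|=p^{m-2}>p^{m-p-1}=|T|$ (this is where $p\geq 2$ enters), we have $G'\not\leq T$ and so $(G/T)'\neq 1$. For each $k\in\{2,\dots,p\}$ the group $G/T$ has exactly one normal subgroup of index $p^{k}$: it has at least one, being a $p$-group of order $p^{p+1}$, and any such subgroup has the form $N/T$ with $N$ a normal subgroup of $G$ of index $p^{k}$, which is unique by hypothesis. Phrased in terms of orders, $G/T$ is a noncyclic group of order $p^{p+1}>p^{2}$ possessing only one normal subgroup of order $p^{i}$ for each $1\leq i<(p+1)-1$, so by the converse of Proposition~\ref{maxi}(\ref{maxim}) the group $G/T$ is of maximal class.

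Finally I would invoke \cite[Theorem 12.9]{Ber1}: $T$ is the only normal subgroup of $G$ of index $p^{p+1}$ and $G/T$ is of maximal class, hence $G$ is of maximal class. The substance of the argument is concentrated in this last implication, passing from ``$G/T$ of maximal class'' to ``$G$ of maximal class'', which is precisely what the cited theorem provides; this is the step I expect to be the main obstacle. A self-contained alternative would be induction on $m$: for $m\leq p+2$ the conclusion follows at once from the hypothesis and the converse of Proposition~\ref{maxi}(\ref{maxim}), since then $\{2,\dots,m-1\}\subseteq\{2,\dots,p+1\}$; for $m\geq p+3$ one checks that the $N_{k}$ form a chain $N_{p+1}\leq\cdots\leq N_{2}$ (each containment by uniqueness applied to a normal subgroup of $G/N_{j}$ of the appropriate index), chooses a minimal normal subgroup $L\leq N_{p+1}$ — which is central, of order $p$, and contained in every $N_{k}$ — so that $G/L$ inherits the hypotheses and is of maximal class by induction, and then obtains $K_{m-1}(G)\leq L$; the remaining difficulty is to rule out $K_{m-1}(G)=1$, i.e. $cl(G)=m-2$, in which case $|Z(G)|=p^{2}$ with $K_{m-2}(G)\leq Z(G)$, and closing this off needs the finer commutator structure of $p$-groups of maximal class rather than the elementary counting used above.
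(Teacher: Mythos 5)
Your main argument is correct and follows essentially the same route as the paper: establish $|G:G'|=p^{2}$, pass to the quotient by the unique normal subgroup $T$ of index $p^{p+1}$, verify that $G/T$ is of maximal class from the uniqueness hypothesis for $k=2,\dots,p$, and lift back to $G$ via the cited Theorem 12.9 of Berkovich; you merely fill in the justifications the paper leaves implicit (why $|G:G'|=p^{2}$ and why $G/T$ is of maximal class). The inductive "self-contained alternative" you sketch at the end is indeed incomplete, as you note, but it is not needed.
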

\begin{proof} Obviously, $|G : G'|= p^{2}$ hence $ d(G) = 2 $. Assume that $G$
is not of maximal class, then $m > p + 1$. Let $T < G'$ be
G-invariant of index $p^{p + 1}$ in $G$, then $G/T$ is of maximal
class. As, by hypothesis, $T$ is the unique normal subgroup of index
$p^{p + 1}$ in $G$, then $G$ is of maximal class.
\end{proof}

\begin{prop}\label{M.Suzuki}(M.Suzuki). Let $G$ be nonabelian $p$-group. If $A<G$ of order
$p^{2}$ is such that $C_{G}(A)=A$, then $G$ is of maximal class.
\end{prop}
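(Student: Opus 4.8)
The plan is to argue by induction on $|G|=p^{m}$. Since $G$ is nonabelian we have $m\ge 3$, and the base case $m=3$ is immediate: a nonabelian group of order $p^{3}$ has nilpotency class $2=m-1$, so it is of maximal class. Assume now $m\ge 4$ and that the assertion holds for all groups of smaller order.

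First I would pin down the bottom of the upper central series of $G$. As $Z(G)$ centralizes $A$, we have $Z(G)\le C_{G}(A)=A$; if $|Z(G)|=p^{2}$ then $Z(G)=A$, whence $C_{G}(A)=G\ne A$, contradicting that $G$ is nonabelian. Hence $|Z(G)|=p$ and $Z(G)<A$. Next I would show that $|Z_{2}(G)|=p^{2}$ and $A\cap Z_{2}(G)=Z(G)$. By the $N/C$-theorem, $N_{G}(A)/A$ embeds in $Aut(A)$, and whether $A$ is cyclic or elementary abelian of order $p^{2}$ the group $Aut(A)$ has a Sylow $p$-subgroup of order $p$; thus $|N_{G}(A)|\le p^{3}$. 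From $[Z_{2}(G),A]\le[Z_{2}(G),G]\le Z(G)\le A$ one gets $Z_{2}(G)\le N_{G}(A)$. Moreover, if $A\le Z_{2}(G)$ then $[A,G]\le Z(G)\le A$ forces $N_{G}(A)=G$, so $G/A$ embeds in $Aut(A)$ and $|G|\le p^{3}$, a contradiction. Therefore $A\not\le Z_{2}(G)$, hence $A\cap Z_{2}(G)=Z(G)$; and then $|A\,Z_{2}(G)|=p\,|Z_{2}(G)|\le|N_{G}(A)|\le p^{3}$, which together with $Z(G)<Z_{2}(G)$ (true because $G$ is nonabelian) yields $|Z_{2}(G)|=p^{2}$. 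In particular $A\ne Z_{2}(G)$.

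The core of the proof is to produce, inside $\bar G:=G/Z(G)$, a self-centralizing subgroup of order $p^{2}$, and then invoke the inductive hypothesis. Fix $a\in A\setminus Z(G)$, so that $A=\langle a\rangle Z(G)$, and let $\bar a$ be the image of $a$ in $\bar G$, a nontrivial element of the subgroup $\bar A:=A/Z(G)$ of order $p$. I claim that $\bar B:=C_{\bar G}(\bar a)$ has order exactly $p^{2}$. For the lower bound, $\bar A\le\bar B$ because $A$ is abelian, and $Z_{2}(G)/Z(G)\le\bar B$ because $[Z_{2}(G),a]\le[Z_{2}(G),G]\le Z(G)$; since $A\ne Z_{2}(G)$ these two subgroups of order $p$ are distinct, so $|\bar B|\ge p^{2}$. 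For the upper bound, if $\bar g$ centralizes $\bar a$ then $g^{-1}ag\in aZ(G)\subseteq A$, and since $Z(G)$ is normal and $A=\langle a\rangle Z(G)$ this forces $g\in N_{G}(A)$; hence $\bar B\le N_{G}(A)/Z(G)$, of order at most $p^{2}$. So $|\bar B|=p^{2}$, $\bar B$ is abelian, and $C_{\bar G}(\bar B)\le C_{\bar G}(\bar a)=\bar B\le C_{\bar G}(\bar B)$, i.e.\ $\bar B$ is self-centralizing in $\bar G$. Since $|\bar G|=p^{m-1}\ge p^{3}>p^{2}$, the subgroup $\bar B$ is proper, so $\bar G$ is nonabelian, and by induction $\bar G$ is of maximal class, i.e.\ $cl(\bar G)=m-2$. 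Using the standard identity $cl(G/Z(G))=cl(G)-1$ we conclude $cl(G)=m-1$, so $G$ is of maximal class.

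I expect the main obstacle to be the simultaneous verification of the two bounds on $|C_{\bar G}(\bar a)|$. The lower bound depends on first establishing $|Z_{2}(G)|=p^{2}$ with $Z_{2}(G)\not\subseteq A$, and the upper bound depends on the $N/C$-theorem together with the fact that $Aut(A)$ has a cyclic Sylow $p$-subgroup; once those structural facts about $Z(G)$, $Z_{2}(G)$ and $N_{G}(A)$ are in place, the passage to $G/Z(G)$ and the induction are routine bookkeeping.
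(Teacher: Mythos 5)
Your proof is correct and follows essentially the same strategy as the paper: induct on $|G|$, show $|Z(G)|=p$ with $Z(G)<A$, and exhibit a self-centralizing abelian subgroup of order $p^{2}$ in $G/Z(G)$ --- your $\bar B=C_{\bar G}(\bar a)$ coincides with the paper's $N_{G}(A)/Z(G)=C_{G/Z(G)}(A/Z(G))$. The only cosmetic difference is that you obtain the lower bound $|\bar B|\geq p^{2}$ via the structure of $Z_{2}(G)$, whereas the paper gets it at once from the fact that normalizers grow in $p$-groups, so that $|N_{G}(A)|$ equals $p^{3}$ exactly.
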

\begin{proof} We use induction on $|G|$. Since $p^{2}\nmid|Aut(A)|$ then, by
N/C-theorem, $N_{G}(A)$ is nonabelian of order $p^{3}$. As $A<G$,
then $ Z(G)<A $ and $|Z(G)|=p$. Obviously, $C_{G/Z(G)}(A/Z(G))=
N_{G/Z(G)}(A/Z(G))$. Since $ C_{G/Z(G)}(N_{G}(A)/Z(G))\leq
C_{G/Z(G)}(A/Z(G))=N_{G}(A)/Z(G)$ is of order $p^{2}$ then, by
induction, $G/Z(G)$ is of maximal class so $G$ is also of maximal
class since $|Z(G)|=p$.
\end{proof}

 \begin{cor} A $p$-group $G$ is of maximal class if and only if $G$ has an element with centralizer of
order $p^{2}$.
 \end{cor}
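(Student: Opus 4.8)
The statement is an equivalence; I would prove the two implications separately, the \emph{only if} part being where the work lies.

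For the \emph{if} direction, suppose $G$ has an element $x$ with $|C_G(x)|=p^2$. Then $|G|\geq p^2$, and if $|G|=p^2$ the group is of maximal class by definition; so assume $|G|\geq p^3$. Then $G$ is nonabelian, since otherwise $C_G(x)=G$ would have order at least $p^3$. Put $A:=C_G(x)$; it is a subgroup of order $p^2$, hence abelian, and $A<G$. As $x\in A$ we have $C_G(A)\leq C_G(x)=A$, while $A\leq C_G(A)$ since $A$ is abelian, so $C_G(A)=A$; Proposition \ref{M.Suzuki} then gives that $G$ is of maximal class. This direction is immediate once Proposition \ref{M.Suzuki} is on hand.

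For the \emph{only if} direction, let $G$ be of maximal class of order $p^m$. If $m=2$ then $G$ is abelian and $C_G(x)=G$ has order $p^2$ for every $x$. If $m=3$ then $G$ is nonabelian with $|Z(G)|=p$ by Proposition \ref{maxi}; taking $x\in G\setminus Z(G)$, the subgroup $C_G(x)$ is proper, so $|C_G(x)|\leq p^2$, while $C_G(x)\supseteq\langle x,Z(G)\rangle$ has order $\geq p^2$, whence $|C_G(x)|=p^2$. Assume $m\geq 4$ and induct on $m$: $|Z(G)|=p$ by Proposition \ref{maxi}, and $\bar G:=G/Z(G)$ is again of maximal class, of order $p^{m-1}\geq p^3$, so by induction there is $\bar s\in\bar G$ with $|C_{\bar G}(\bar s)|=p^2$. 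For a lift $s\in G$ of $\bar s$ one has $Z(G)\leq C_G(s)$ and $C_G(s)/Z(G)\leq C_{\bar G}(\bar s)$, so $|C_G(s)|\leq p^3$; and $\bar s\neq\bar 1$ (since $p^2<p^{m-1}$), hence $s\notin Z(G)$ and $|C_G(s)|\geq|\langle s,Z(G)\rangle|\geq p^2$. Thus $|C_G(s)|\in\{p^2,p^3\}$, and we are done if it equals $p^2$.

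The main obstacle is exactly the residual case $|C_G(s)|=p^3$, where the centralizer enlarges on passing to the quotient. The way to handle it is to carry through the induction with the sharper, classical statement: there is a proper (indeed maximal) normal subgroup $G_1\leq G$ --- the fundamental subgroup $G_1=C_G(K_2(G)/K_4(G))$ introduced in Section 4 --- such that every element of $G\setminus G_1$ has centralizer of order exactly $p^2$; since $G\setminus G_1\neq\emptyset$, this produces the required element immediately. The induction closes because $G_1/Z(G)$ is the fundamental subgroup of $\bar G$, so $s\notin G_1$ forces $\bar s\notin G_1/Z(G)$; and in the case $|C_G(s)|=p^3$ one reaches a contradiction by showing $C_{K_2(G)}(s)=Z(G)$, which, together with $K_2(G)=\Phi(G)$, forces $C_G(s)$ to surject onto $G/\Phi(G)$ and hence $C_G(s)=G$ --- impossible. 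The genuinely laborious point inside this is the Blackburn-type commutator computation (via the Hall--Witt identity, i.e.\ the three-subgroups lemma) showing that an element outside $G_1$ acts without nonzero fixed points on every two-step chief factor $K_i(G)/K_{i+2}(G)$; the cleanest organization is to isolate that as a lemma in Section 4 and quote it here.
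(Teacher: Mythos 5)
Your \emph{if} direction is exactly what the paper intends: the corollary is stated without proof immediately after Proposition \ref{M.Suzuki}, and setting $A=C_G(x)$ (abelian of order $p^{2}$ with $C_G(A)=A$, since $x\in A$ gives $C_G(A)\leq C_G(x)=A$) is the right one-line reduction. Your treatment of the degenerate cases $|G|\leq p^{3}$ and the start of the induction for the converse are also fine, and you correctly isolate the real difficulty, namely the residual case $|C_G(s)|=p^{3}$ after passing to $G/Z(G)$.

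However, the lemma you propose in order to close the induction --- that \emph{every} element of $G\setminus G_{1}$ has centralizer of order exactly $p^{2}$ --- is false in general, so the argument has a genuine gap. Write $|G|=p^{m}$. The claim holds for $m=4$ and, by Blackburn's theorem that all two-step centralizers coincide, for $m\geq p+2$; but for $5\leq m\leq p+1$ the maximal subgroups $M_{i}=C_G(K_{i}(G)/K_{i+2}(G))$ introduced in Section 4 need not all equal $G_{1}=M_{2}$. If $M_{m-2}=C_G(Z_{2}(G))\neq G_{1}$, then any $s\in M_{m-2}\setminus G_{1}$ centralizes $Z_{2}(G)$, whence $|C_G(s)|\geq|\langle s\rangle Z_{2}(G)|\geq p^{3}$ even though $s\notin G_{1}$; in particular your intended identity $C_{K_{2}(G)}(s)=Z(G)$ fails for such $s$, and the fixed-point-free action on $K_{i}(G)/K_{i+2}(G)$ only holds for the indices $i$ with $s\notin M_{i}$. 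Such groups exist: for $p\geq 7$, the Lazard correspondent of the six-dimensional Lie algebra with $[x,y]=e_{2}$, $[x,e_{i}]=e_{i+1}$ ($i=2,3,4$), $[y,e_{4}]=e_{5}$, $[e_{2},e_{3}]=-e_{5}$ and all other brackets zero is of maximal class of order $p^{6}$ with $C_G(Z_{2}(G))\neq G_{1}$. The correct statement to carry through is that any $s$ outside the union $\bigcup_{i=2}^{m-2}M_{i}$ satisfies $C_G(s)=\langle s\rangle Z(G)$ of order $p^{2}$; such $s$ exists because that union involves at most $m-3\leq p-2$ distinct maximal subgroups when $m\leq p+1$ (too few to cover $G$, which has $p+1$ maximal subgroups), while for $m\geq p+2$ all the $M_{i}$ equal $G_{1}$ by Blackburn. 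With the lemma amended in this way, your commutator computation on the factors $K_{i}(G)/K_{i+2}(G)$ does yield $C_{K_{2}(G)}(s)=Z(G)$ and the Frattini argument finishes the proof.
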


\begin{prop} \label{CGB}Let $G$ be a $p$-group, $B\leq G$ nonabelian of order $p^{3}$ and $C_{G}(B)< B$. Then
$G$ is of maximal class.
\end{prop}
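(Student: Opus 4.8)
The plan is to reduce this to Proposition~\ref{M.Suzuki} by producing inside $G$ a subgroup $A$ of order $p^2$ that is self-centralizing. The hypotheses give $B$ nonabelian of order $p^3$ with $C_G(B)<B$. Since $B$ is nonabelian of order $p^3$, its center $Z(B)$ has order $p$ and equals $B'$; moreover $C_G(B)\le C_B(B)=Z(B)$, and since $C_G(B)<B$ combined with $C_G(B)$ being a subgroup containing $Z(B)$ is not quite automatic, the first step is to pin down $C_G(B)$. In fact $C_G(B)\le B$ forces $C_G(B)=C_B(B)=Z(B)$, which has order $p$.

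Next I would work one step up: set $N=N_G(B)$. The $N/C$-theorem gives $N/C_G(B)=N/Z(B)$ embeds in $\mathrm{Aut}(B)$. Now $|B/Z(B)|=p^2$, so $B/Z(B)\le N/Z(B)$ has order $p^2$ inside $\mathrm{Aut}(B)$. The key arithmetic fact is that a Sylow $p$-subgroup of $\mathrm{Aut}(B)$, for $B$ nonabelian of order $p^3$, has order $p^3$ (for $p$ odd one checks this directly from the two isomorphism types; for $p=2$, $B=D_8$ or $Q_8$ and $\mathrm{Aut}(D_8)\cong D_8$, $\mathrm{Aut}(Q_8)\cong S_4$, so again the $2$-part is $p^3=8$). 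Hence $|N/Z(B)|\le p^3$, i.e. $|N|\le p^4$, and since $B\le N$ with $|B|=p^3$ we get $|N|\in\{p^3,p^4\}$.

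Now I would choose $A$ to be a subgroup of order $p^2$ lying between $Z(B)$ and $B$, normal in $N$ — such $A$ exists because $B\triangleleft N$ and a normal subgroup of the $p$-group $N$ of order $p^3$ has a chief series through it, so there is $A\triangleleft N$ with $Z(B)<A<B$, $|A|=p^2$. The aim is to show $C_G(A)\le A$, after which Proposition~\ref{M.Suzuki} finishes the proof once we also note $A$ is contained in a nonabelian group and hence $G$ is nonabelian. Suppose $C_G(A)\not\le A$; pick $x\in C_G(A)\setminus A$. Then $\langle A,x\rangle$ has order $\ge p^3$ and centralizes $A$, so $A\le Z(\langle A,x\rangle)$; analyzing how $x$ acts on $B$ (it normalizes $A$ but we need it to normalize $B$) is the delicate point. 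This is where the main obstacle lies: controlling elements of $C_G(A)$ that might not normalize $B$. I would handle it by first passing to $B C_G(A)$ and showing $C_G(A)\cap B=A$ (since $C_G(B)=Z(B)<A$ and $C_B(A)$ — as $A$ is a maximal subgroup of the order-$p^3$ group $B$ and $B$ is nonabelian — equals $A$), then arguing that if $C_G(A)>A$ one builds an abelian subgroup of order $\ge p^3$ whose intersection with $B$ is $A$, producing a subgroup $BC_G(A)$ of order $\ge p^4$ in which $B$ is normal (being characteristic-ish via $A$) with too-large centralizer, contradicting $C_G(B)=Z(B)$. The cleanest route, which I would pursue first, is instead: reduce modulo $Z(B)$ and use induction on $|G|$ exactly as in Proposition~\ref{M.Suzuki}, noting $C_{G/Z(B)}(A/Z(B))$ has order $p^2$ (it cannot be larger, else a preimage element centralizes $A$ and, together with $B$, forces $C_G(B)>Z(B)$), so $G/Z(B)$ is of maximal class by the self-centralizing-$p^2$-subgroup criterion, and then $G$ itself is of maximal class since $|Z(B)|=p$ and the extension does not change the "maximal class" property when the kernel is central of order $p$.
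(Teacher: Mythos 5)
There is a genuine gap, and it sits exactly at the point you flag as "the delicate point." Your strategy is to find $A$ with $Z(B)<A<B$, $|A|=p^{2}$, $A\lhd N_{G}(B)$, and to show either that $C_{G}(A)\leq A$ or that $C_{G/Z(B)}(A/Z(B))$ has order $p^{2}$, so as to invoke Proposition~\ref{M.Suzuki}. Both claims are false in general. Take $G$ of maximal class and order $p^{4}$ with a nonabelian maximal subgroup $B$ (so $C_{G}(B)=Z(G)=Z(B)$ and the hypotheses of the proposition hold). Here $N_{G}(B)=G$, and the only normal subgroup of $G$ of order $p^{2}$ is $K_{2}(G)=G'$, so necessarily $A=G'$. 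But $[G,G']=K_{3}(G)=Z(B)$, so $A/Z(B)$ is \emph{central} in $G/Z(B)$ and $C_{G/Z(B)}(A/Z(B))=G/Z(B)$ has order $p^{3}$, not $p^{2}$; likewise $C_{G}(A)=C_{G}(K_{2}(G))$ is the fundamental subgroup $G_{1}$, of order $p^{3}$, so $C_{G}(A)\not\leq A$. The parenthetical justification you give ("else a preimage element centralizes $A$") confuses two different conditions: an element $x$ with $[x,A]\leq Z(B)$ centralizes $A$ only modulo $Z(B)$, not in $G$, so no contradiction with $C_{G}(B)=Z(B)$ can be extracted. In short, the reduction to a self-centralizing subgroup of order $p^{2}$ cannot be carried out inside $B$.

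The correct escape, and the route the paper takes, is to make the induction run on the statement of Proposition~\ref{CGB} itself rather than reducing to Proposition~\ref{M.Suzuki}. Your preliminary observations are the right ones: $C_{G}(B)=Z(B)=Z(G)$ has order $p$, and since a Sylow $p$-subgroup of $Aut(B)$ is nonabelian of order $p^{3}$ while normalizers grow in $p$-groups, $N_{G}(B)/Z(G)$ is nonabelian of order exactly $p^{3}$ once $|G|\geq p^{4}$. The new "$B$" in $\overline{G}=G/Z(G)$ is then $\overline{B}=N_{G}(B)/Z(G)$, not a piece of $B$: any $x$ whose image centralizes $\overline{B}$ in particular normalizes $BZ(G)=B$, hence lies in $N_{G}(B)$, so $C_{\overline{G}}(\overline{B})\leq\overline{B}$, and the containment is proper because $\overline{B}$ is nonabelian. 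Induction on $|G|$ now gives that $\overline{G}$ is of maximal class, and since $|Z(G)|=p$ so is $G$. I recommend reworking your argument along these lines; the material you assembled about $|N_{G}(B)|\leq p^{4}$ and $C_{G}(B)=Z(B)$ all gets reused, but the object you pass to the inductive hypothesis must be $N_{G}(B)/Z(G)$.
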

\begin{proof} Assume that $|G|\geq p^{4}$ and the proposition has been proved
for groups of order $<|G|$ . It is known that a Sylow $p$-subgroup of
$Aut(B)$ is nonabelian of order $p^{3}$. Now, $C_{G}(B)=Z(B)=Z(G)$.
Therefore, by N/C-Theorem, $N_{G}(B)/Z(G)$ is nonabelian of order
$p^{3}$. If $x\in G-C_{G}(B)$ centralizes $N_{G}(B)/Z(G)$, then $x$
normalizes $B$ so $x\in N_{G}(B)$, a contradiction. Thus,
$C_{G}(N_{G}(B)/Z(G))<N_{G}(B)/Z(G)$ so, by induction, $G/Z(G)$ is
of maximal class. Since $|Z(G)|=p$, we are done.\end{proof}

\begin{prop}\label{NORM} Let $G$ be a $p$-group. If $G$ has a subgroup $H$ such
that $N_{G}(H)$ is of maximal class, then it is of maximal class.
\end{prop}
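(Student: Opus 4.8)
The plan is to argue by induction on $|G|$. Write $N := N_G(H)$. If $N = G$ there is nothing to prove, so assume $N < G$; note that then $H \neq \langle 1\rangle$, for otherwise $N = N_G(\langle 1\rangle) = G$. The first thing I would record is that $C_G(N) \le N$: any element of $G$ centralizing $N$ normalizes $H \le N$, hence lies in $N_G(H) = N$. Thus $C_G(N) = Z(N)$, and since $N$ is of maximal class, $|Z(N)| = p$ by Proposition \ref{maxi}. In particular $Z(G) \le C_G(N) = Z(N)$, and as $Z(G) \neq \langle 1\rangle$ this gives $Z(G) = Z(N)$, of order $p$. I would then observe that $Z(G) \le H$: being a $p$-group with centre of order $p$, the maximal-class group $N$ has $Z(N)$ as its unique minimal normal subgroup, so $Z(N)$ is contained in the nontrivial normal subgroup $H$ of $N$; hence $Z(G) = Z(N) \le H$.

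Next I would distinguish two cases. If $|N| = p^3$, then $N$ is a nonabelian group of order $p^3$ with $C_G(N) = Z(N) < N$, and Proposition \ref{CGB} applied with $B = N$ shows immediately that $G$ is of maximal class. If $|N| = p^m$ with $m \ge 4$, I would pass to $\overline G := G/Z(G)$, writing $\overline H := H/Z(G)$ and $\overline N := N/Z(G)$. Since $Z(G) \le H$, the correspondence between subgroups of $\overline G$ and subgroups of $G$ containing $Z(G)$ gives $N_{\overline G}(\overline H) = N_G(H)/Z(G) = \overline N$. Moreover $\overline N = N/Z(N)$ is a quotient of the maximal-class group $N$ by a normal subgroup of index $p^{m-1} \ge p^2$, hence is itself of maximal class, of order $p^{m-1} \ge p^3$. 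Therefore $\overline G$ satisfies the hypothesis of the proposition, and $|\overline G| < |G|$, so by induction $G/Z(G)$ is of maximal class. Finally, since $|Z(G)| = p$ one has $Z_{i+1}(G)/Z(G) = Z_i(G/Z(G))$ for all $i$, so $cl(G) = cl(G/Z(G)) + 1$; combined with $|G| = p\,|G/Z(G)|$ this yields that $G$ is of maximal class, which closes the induction.

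The one step that genuinely needs attention is the reduction to $G/Z(G)$: a priori the image $\overline H$ of $H$ might be normalized by strictly more than $\overline N$, and then $N_{\overline G}(\overline H)$ need not be of maximal class, breaking the inductive step. This is exactly what the inclusion $Z(G) = Z(N) \le H$ prevents, and that inclusion in turn rests on $Z(N)$ being the unique minimal normal subgroup of the maximal-class group $N$ together with $H$ being normal in $N$. The remaining ingredients — $C_G(N) \le N$, $|Z(N)| = p$ (Proposition \ref{maxi}), the fact that the quotient of a maximal-class group of order $\ge p^4$ by a normal subgroup of index $\ge p^2$ is again of maximal class, and the identity $cl(G) = cl(G/Z(G)) + 1$ valid when $|Z(G)| = p$ — are all routine, and the base case of the induction is subsumed in the order-$p^3$ case treated via Proposition \ref{CGB}.
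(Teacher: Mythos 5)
Your proof is correct and follows essentially the same route as the paper: establish $Z(G)=Z(N_G(H))$ of order $p$ and $Z(G)\leq H$, pass to $G/Z(G)$ where $N_{G/Z(G)}(H/Z(G))=N_G(H)/Z(G)$ remains of maximal class, and conclude by induction. The only cosmetic difference is the base case, where you invoke Proposition \ref{CGB} for the nonabelian subgroup $N_G(H)$ of order $p^3$ while the paper uses Proposition \ref{M.Suzuki}; your justification of $Z(G)\leq H$ via the unique minimal normal subgroup of $N_G(H)$ is in fact a little cleaner than the paper's detour through $|N_G(H):H|=p$.
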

\begin{proof} Assume that $|N_{G}(H)| > p^{3}$ (otherwise, $C_{G}(H)=H$ and
$G$ is of maximal class, by Proposition \ref{M.Suzuki}). We use
induction on $|G|$. One may assume that $N_{G}(H)< G$, then $H$ is
not characteristic in $N_{G}(H)$ so by Proposition \ref{maxi}, we
have $|N_{G}(H):H|=p$ hence $|H|>p$. As $|Z(N_{G}(H))|=p$ and
$Z(G)<N_{G}(H)$, we get $Z(G)=Z(N_{G}(H))$ so $|Z(G)|=p$ and
$Z(G)<H$. Then $N_{G/Z(G)}(H/Z(G))=N_{G}(H)/Z(G)$ is of maximal
class, so $G/Z(G)$ is also of maximal class by induction. Since
$|Z(G)|=p$, then $G$ is of maximal class.
\end{proof}

\begin{lem}
Let $G$ be a $p$-group and let $N\lhd G$ be of order $> p$. Suppose
that $G/N$ of order $> p$ has cyclic center. If $R/N\lhd G/N$ is of
order $p$ in $G/N$, then $R$ is not of maximal class.
\end{lem}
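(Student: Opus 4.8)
The plan is to assume, for contradiction, that $R$ is of maximal class and to induct on $|G|$. Write $|R| = p^{k}$, so $|N| = p^{k-1}$ and the hypothesis $|N| > p$ gives $k \ge 3$; also $R \lhd G$ (from $N \lhd G$ and $R/N \lhd G/N$) and $R < G$ strictly, since $|G/N| > p = |R/N|$. The one preliminary fact used throughout is that $Z(R) \le N$: since $|Z(R)| = p$ by Proposition \ref{maxi}, if $Z(R) \not\le N$ then $Z(R) \cap N = 1$ and $Z(R)N = R$, so $R = Z(R) \times N$, and taking centers gives $Z(R) = Z(R) \times Z(N)$, forcing $Z(N) = 1$ — impossible as $N \ne 1$. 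Thus $Z := Z(R) \le N$, and $Z$, being characteristic in $R \lhd G$ of order $p$, lies in $Z(G)$.

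For $k \ge 4$ I would pass to $G/Z$. Then $R/Z$ is $R$ modulo a normal subgroup of index $\ge p^{2}$, hence again of maximal class; $|N/Z| = p^{k-2} > p$; the quotient $(G/Z)/(N/Z)$ equals $G/N$, with cyclic center; and $(R/Z)/(N/Z)$ equals $R/N$, normal of order $p$. Hence $(G/Z, N/Z, R/Z)$ satisfies all the hypotheses with $|G/Z| < |G|$, so by induction $R/Z$ is not of maximal class — a contradiction.

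It remains to handle the base case $k = 3$, where $R$ is nonabelian of order $p^{3}$ and $N$ is a maximal (hence abelian) subgroup of order $p^{2}$ containing $Z$. Here I would split on $C_{G}(R)$. If $C_{G}(R) \ne Z$, then $C := C_{G}(R)$ is normal in $G$ with $C \cap R = Z(R) = Z$ and $|C| \ge p^{2}$, so $C/Z$ meets $Z(G/Z)$ nontrivially; picking $Z \le L \le C$ with $|L| = p^{2}$ and $L/Z \le Z(G/Z)$ gives $[L,G] \le Z \le N$, together with $L \cap N = Z$ and $LN \cap R = N$. Then $LN/N$ is a subgroup of $Z(G/N)$ of order $p$ meeting $R/N$ trivially, so $Z(G/N)$ has two distinct subgroups of order $p$ and is not cyclic — a contradiction. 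If instead $C_{G}(R) = Z$, the N/C-theorem embeds $G/Z$ in $\mathrm{Aut}(R)$, whose Sylow $p$-subgroup has order $p^{3}$; since also $|G| \ge p\,|R| = p^{4}$ (as $R < G$), this forces $|G| = p^{4}$ and $G/Z$ nonabelian of order $p^{3}$. Then the normal order-$p$ subgroup $N/Z$ equals $Z(G/Z) = \Phi(G/Z)$, so $G/N \cong (G/Z)/\Phi(G/Z)$ is elementary abelian of order $p^{2}$, and $Z(G/N)$ is again not cyclic — a contradiction.

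I expect the base case $k = 3$ to be the main obstacle: there is nothing left to reduce, so one must exhibit a second, independent central subgroup of order $p$ in $G/N$ (equivalently, a non-cyclic central section of $G$). The dichotomy on $C_{G}(R)$ is what makes this possible — a centralizer strictly larger than $Z$ supplies the required $G$-invariant subgroup of order $p^{2}$, while the smallest possible centralizer pins $|G|$ down to $p^{4}$ through the N/C-theorem and reduces the matter to a short computation with $\mathrm{Aut}(R)$ and $\Phi(G/Z)$. Checking that the reduced triple in the inductive step genuinely satisfies the hypotheses — in particular $|N/Z| > p$, which fails precisely when $k = 3$ and is why that case must be treated separately — is straightforward.
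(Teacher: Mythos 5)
Your proof is correct, but it takes a genuinely different route from the paper's. The paper argues directly: it fixes a $G$-invariant subgroup $T$ of index $p^{2}$ in $N$ and asserts $R\leq C_{G}(N/T)$, whence $N/T\leq Z(R/T)$ and $R/T$ is abelian of order $p^{3}$ --- impossible for a group of maximal class, since $T\lhd R$ has index $p^{3}$ in $R$. The (unstated) justification of that containment is exactly where the cyclic-centre hypothesis enters there: $C_{G}(N/T)$ is normal in $G$ and properly contains $N$, because $G/C_{G}(N/T)$ embeds in $\mathrm{Aut}(N/T)$, whose Sylow $p$-subgroup has order $p$, while $|G/N|>p$; and since $Z(G/N)$ is cyclic, $R/N$ is the unique minimal normal subgroup of $G/N$, so every normal subgroup of $G$ properly containing $N$ must contain $R$. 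You instead run an induction on $|G|$ by factoring out $Z(R)$ and, in the base case $|R|=p^{3}$, split on $C_{G}(R)$, in each branch manufacturing a second subgroup of order $p$ in $Z(G/N)$ disjoint from $R/N$. Both arguments are sound, and both lean on the N/C-theorem, applied to different objects ($N/T$ of order $p^{2}$ versus $R$ of order $p^{3}$, the latter via the standard fact, also used in the paper's Proposition \ref{CGB}, that a Sylow $p$-subgroup of the automorphism group of a nonabelian group of order $p^{3}$ is nonabelian of order $p^{3}$). The paper's version is a one-step reduction once one sees the key containment; yours is longer but self-contained, and it makes visible that the only role of the cyclic centre is to forbid two independent central subgroups of order $p$ in $G/N$ --- which is also, implicitly, what the uniqueness of the minimal normal subgroup achieves in the paper's argument.
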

\begin{proof} Let $T$ be a G-invariant subgroup of index $p^{2}$ in $N$. Then
$R\leq C_{G}(N/T)$ so $R/T$ is abelian of order $p^{3}$, and we
conclude that $R$ is not of maximal class.
\end{proof}

\begin{prop}\label{index}Let $A < G$ be of order $> p$. If all subgroups of $G$ containing $A$
as a subgroup of index $p$ are of maximal class, then $G$ is also of
maximal class.
\end{prop}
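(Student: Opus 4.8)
The plan is to induct on $|G|$. First I would dispose of the trivial case: since $A<G$ is a proper subgroup of a $p$-group, $A<N_G(A)$, so there is at least one subgroup $H$ with $A\lhd H\le N_G(A)$ and $|H:A|=p$; by hypothesis $H$ is of maximal class, hence nonabelian, so $G$ itself is nonabelian. If moreover $|G:A|=p$, then $G$ is the only subgroup containing $A$ with index $p$, so $G$ is of maximal class and we are done. Thus I may assume $|G:A|\ge p^2$, which together with $|A|\ge p^2$ forces $|G|\ge p^4$, making the induction hypothesis available for proper sections of $G$.

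Next I would split on $|A|$. If $|A|=p^2$, then $A$ is abelian and I claim $C_G(A)=A$: otherwise some $x\in C_G(A)\setminus A$ produces an abelian subgroup $A\langle x\rangle$ properly containing $A$, and choosing $H$ with $A<H\le A\langle x\rangle$ and $|H:A|=p$ yields an abelian group of order $p^3$, contradicting that $H$ is of maximal class. Since $G$ is nonabelian with $A<G$ of order $p^2$ and $C_G(A)=A$, Proposition~\ref{M.Suzuki} gives that $G$ is of maximal class. So in this case the statement reduces to Suzuki's criterion.

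The main case is $|A|\ge p^3$, handled by the standard ``pass to $G/Z(G)$'' technique. The two structural facts I need are (i) $Z(G)\le A$ and (ii) $|Z(G)|=p$. For (i): if $Z(G)\not\le A$, then $AZ(G)$ properly contains $A$, so some $H$ with $A<H\le AZ(G)$ and $|H:A|=p$ contains an element $z\in Z(G)\setminus A$; but $z\in Z(H)$, while for the maximal-class group $H$ (of order $\ge p^4$) one has $Z(H)\le\Phi(H)=K_2(H)\le A$ because $A$ is a maximal subgroup — a contradiction. Granting (i), any $H$ with $A\lhd H\le N_G(A)$ and $|H:A|=p$ contains $Z(G)$, and $Z(G)$ centralizes $H$, so $Z(G)\le Z(H)$; since $|Z(H)|=p$ (Proposition~\ref{maxi}), this gives (ii) and in fact $Z(G)=Z(H)$.

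Finally I would check that the hypothesis descends to $\overline G=G/Z(G)$ with $\overline A=A/Z(G)$: here $\overline A<\overline G$ has order $|A|/p>p$, and the subgroups of $\overline G$ containing $\overline A$ with index $p$ are exactly the quotients $H/Z(G)$ with $A\le H\le G$, $|H:A|=p$ (each such $H$ contains $Z(G)$ by (i)); and each such $H$ is of maximal class of order $\ge p^4$ with $Z(H)=Z(G)$, so $H/Z(G)=H/Z(H)$ is again of maximal class. By the induction hypothesis $\overline G$ is of maximal class, and since $|Z(G)|=p$ it follows that $G$ is of maximal class. I expect the delicate points to be the structural claims (i) and (ii) — pinning down $Z(H)\le A$ in particular — together with the bookkeeping at the order-$p^4$ boundary, where ``maximal class'' must be read through the paper's convention that nonabelian groups of order $p^3$ count as being of maximal class.
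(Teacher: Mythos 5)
Your proof is correct, but it takes a genuinely different route from the paper's. The paper works entirely inside $N=N_{G}(A)$: if $|N:A|=p$, then $N$ itself contains $A$ with index $p$, so it is of maximal class by hypothesis and Proposition~\ref{NORM} finishes at once; if $|N:A|>p$, it picks an $N$-invariant $D<A$ of index $p^{2}$ in $A$, notes that $C_{N}(A/D)>A$ (since $N/C_{N}(A/D)$ embeds in $Aut(A/D)$, whose Sylow $p$-subgroup has order $p$), and extracts $F$ with $A<F\leq C_{N}(A/D)$ and $|F:A|=p$; then $[F,A]\leq D$ makes $F/D$ abelian of order $p^{3}$, which is incompatible with $F$ being of maximal class --- a contradiction. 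In other words, the paper's one new observation is that the hypothesis forces $|N_{G}(A):A|=p$, and all the inductive work is delegated to Proposition~\ref{NORM}. You instead run the induction on $|G|$ yourself: Proposition~\ref{M.Suzuki} handles the base case $|A|=p^{2}$ (via $C_{G}(A)=A$), and the central-quotient step $G\mapsto G/Z(G)$ handles $|A|\geq p^{3}$ after you verify $Z(G)\leq A$ (from $Z(H)\leq\Phi(H)\leq A$ for a maximal-class $H$ in which $A$ is maximal) and $Z(G)=Z(H)$ of order $p$. This essentially inlines the mechanism of Proposition~\ref{NORM} rather than citing it. Both arguments are sound; the paper's is shorter and isolates the abelian-section trick, while yours is self-contained apart from Suzuki's criterion, and your handling of the descent of the hypothesis to $G/Z(G)$ and of the order-$p^{4}$ boundary is careful and correct.
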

\begin{proof} Set $N=N_{G}(A)$. In view of proposition \ref{NORM} and
hypothesis, one may assume that $|N:A|>p$ (otherwise, there is
nothing to prove). Let $D<A$ be N-invariant of index $p^{2}$ ( $D$
exists since $|A| > p$). Set $C=C_{N}(A/D)$, then $C > A$. Let $F/A
\leq C/A$ be of order $p$, then $F$ is not of maximal class, a
contradiction.
\end{proof}

Let $H < G$ be of index $> p^{k}$, $k > 1$. If all subgroups of $G$
of order $p^{k}|H|$, containing $H$, are of maximal class, then $G$
is also of maximal class. Indeed, let $H < M < G$, where $|M :H|=
p^{k}-1$. Then all subgroups of $G$ containing $M$ as a subgroup of
index $p$, are of maximal class. Now the result follows from
Proposition \ref{index}.

\begin{prop}
Let $G$ be a p-group of maximal class and order $p^{m}$, $p > 2$, $m
> 3$, and let $N\lhd G$ be of index $p^{3}$. Then $exp(G/N)=p$.
\end{prop}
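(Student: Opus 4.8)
The plan is to reduce the statement to a concrete fact about nonabelian groups of order $p^{3}$. Since $N\lhd G$ has index $p^{3}\geq p^{2}$, the argument of Proposition \ref{maxi} gives $N=K_{3}(G)$ (for $m=4$ this is also immediate: a normal subgroup of order $p$ of a $p$-group lies in the centre, which here is $K_{3}(G)$ and has order $p$), so the assertion is exactly $\exp\bigl(G/K_{3}(G)\bigr)=p$. I would then reduce to the case $m=4$: the quotient $\overline{G}:=G/K_{4}(G)$ has order $p^{4}$, is again of maximal class when $m\geq 5$ (by the remarks following Proposition \ref{maxi}), and satisfies $\overline{G}/K_{3}(\overline{G})\cong G/K_{3}(G)$; hence it suffices to treat $|G|=p^{4}$, where $K_{3}(G)=Z(G)$ has order $p$ and $G/Z(G)$ is a nonabelian group of order $p^{3}$.

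So assume $|G|=p^{4}$ and suppose, for contradiction, that $g^{p}\notin K_{3}(G)$ for some $g\in G$. Since $G/K_{2}(G)$ has exponent $p$, we get $g^{p}\in K_{2}(G)\setminus K_{3}(G)$. The element $g$ cannot lie in $K_{2}(G)=\Phi(G)$: indeed $K_{2}(G)$ is abelian of order $p^{2}$, hence isomorphic to $C_{p^{2}}$ or to $C_{p}\times C_{p}$, and in either case $\mho_{1}(K_{2}(G))\leq K_{3}(G)$ (being the unique subgroup of order $p$, respectively the trivial subgroup), which would force $g^{p}\in K_{3}(G)$. Thus $g\notin\Phi(G)$, so $M:=\langle g\rangle K_{2}(G)$ is a maximal subgroup of $G$; it has order $p^{3}$ and, being maximal, is normal in $G$.

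The heart of the argument is to pin down the structure of $M$. In $M/K_{3}(G)$, which has order $p^{2}$, the coset $gK_{3}(G)$ satisfies $\bigl(gK_{3}(G)\bigr)^{p}=g^{p}K_{3}(G)\neq 1$, so $M/K_{3}(G)$ is cyclic of order $p^{2}$; since $K_{3}(G)=Z(G)\leq Z(M)$ and a group that is cyclic modulo a central subgroup is abelian, $M$ is abelian of order $p^{3}$. If $M$ were cyclic, $G$ would possess a cyclic subgroup of index $p$, and here I would invoke the classification of such $p$-groups: for $p$ odd these are $C_{p^{4}}$, $C_{p^{3}}\times C_{p}$ and the modular group $M_{p^{4}}$, all of nilpotency class at most $2$, contradicting maximal class. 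Hence $M\cong C_{p^{2}}\times C_{p}$ or $M\cong C_{p}^{3}$, so $|\mho_{1}(M)|\leq p$. As $\mho_{1}(M)$ is characteristic in the normal subgroup $M$ it is normal in $G$, and, having order at most $p$, it is central, hence contained in $Z(G)=K_{3}(G)$. But $g\in M$ gives $g^{p}\in\mho_{1}(M)\leq K_{3}(G)$, contradicting the assumption. Therefore $g^{p}\in K_{3}(G)$ for every $g\in G$, i.e. $\exp\bigl(G/K_{3}(G)\bigr)=p$.

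I expect the main obstacle to be precisely the structural step of the third paragraph: noticing that $M/K_{3}(G)$ is forced to be cyclic, using centrality of $K_{3}(G)$ to deduce that $M$ is abelian, and then excluding $M\cong C_{p^{4}}$ by the classification of $p$-groups ($p$ odd) with a cyclic maximal subgroup. This is also the only point where $p>2$ enters: for $p=2$ that classification additionally contains the dihedral, quaternion and semidihedral groups, which are of maximal class, and indeed the proposition fails for $p=2$ (for instance $D_{16}/Z(D_{16})\cong D_{8}$, which has exponent $4$). The remaining ingredients — the reduction to $m=4$, the identification $N=K_{3}(G)$, and the disposal of the case $g\in\Phi(G)$ — are routine.
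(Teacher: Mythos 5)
Your proof is correct, but it takes a genuinely different route from the paper's. The paper argues by contradiction at the level of $G/N$ and $G/T$, where $T$ is a $G$-invariant subgroup of index $p$ in $N$: if $\exp(G/N)\neq p$, then $G/N$ is nonabelian of order $p^{3}$ and exponent $p^{2}$ and, for $p>2$, possesses two distinct cyclic subgroups $C/N$ and $Z/N$ of order $p^{2}$; their intersection (read as a statement about preimages modulo $T$) is then identified with the centre of $G/T$, which is incompatible with $G/T$ being of maximal class, since that forces $|Z(G/T)|=p$. You instead reduce to $|G|=p^{4}$ and analyse the single maximal subgroup $M=\langle g\rangle\Phi(G)$ attached to a hypothetical $g$ with $g^{p}\notin K_{3}(G)$: $M$ is abelian because it is cyclic modulo the central subgroup $K_{3}(G)=Z(G)$, it cannot be cyclic by the classification of $p$-groups, $p$ odd, with a cyclic subgroup of index $p$ (all of class at most $2$), hence $\mho_{1}(M)\leq Z(G)$, contradicting $g^{p}\notin K_{3}(G)$. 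Your version is longer, but every step is elementary and fully justified, and it isolates exactly where $p>2$ enters (together with the genuine counterexample $D_{16}$ for $p=2$); the paper's version is shorter but leaves the reader to supply the verification of its central claim $C/N\cap Z/N=Z(G/T)$, which as literally written does not typecheck. One slip: in your closing paragraph ``$M\cong C_{p^{4}}$'' should read $M\cong C_{p^{3}}$, since $|M|=p^{3}$; the body of your argument has it right.
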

\begin{proof} Assume that this is false. Let $T$ be a G-invariant subgroup of
index $p$ in $N$. By hypothesis, $G/N$ has two distinct cyclic
subgroups $C/N$ and $Z/N$ of order $p^{2}$. Then $C/N\cap
Z/N=Z(G/T)$ and $G/T$ is not of maximal class, a
contradiction.
\end{proof}

Let $A$ be an abelian subgroup of index $p$ of a nonabelian p-group
$G$. By \cite[lemma 1.1]{Ber1}, we have $|G|=p|G'||Z(G)|$. Hence,
we have the following proposition.

\begin{prop}\label{AGG}
Suppose that a nonabelian $p$-group $G$ has an abelian subgroup $A$ of
index $p$. If $|G:G'|=p^{2}$, then $G$ is of maximal class.
\end{prop}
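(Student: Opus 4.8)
The plan is to induct on $|G| = p^{m}$, letting the cited identity $|G| = p\,|G'|\,|Z(G)|$ control the centre at each stage. From $|G:G'| = p^{2}$ we get $|G'| = p^{m-2}$, and the identity then forces $|Z(G)| = p$. The base case $m = 3$ needs nothing: a nonabelian group of order $p^{3}$ has class $2 = m-1$, hence is of maximal class.

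For the inductive step, assume $m \geq 4$ and set $\bar{G} = G/Z(G)$. I would first check two inclusions that make the hypotheses descend to $\bar{G}$. The first is $Z(G) \leq A$: since $A$ is maximal it is normal, and if $Z(G) \not\leq A$ then $G = A\,Z(G)$ is the product of an abelian subgroup and a central subgroup, hence abelian, a contradiction. The second is $Z(G) \leq G'$: if not, then $Z(G) \cap G' = 1$ (as $|Z(G)| = p$), so $\bar{G}' = G'Z(G)/Z(G) \cong G'$ has order $p^{m-2}$, giving $|\bar{G} : \bar{G}'| = p$ for a group $\bar{G}$ of order $p^{m-1} \geq p^{2}$; this contradicts part (4) of Theorem~\ref{NIL}.

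Granting these, $\bar{G}$ has order $p^{m-1}$, the image $A/Z(G)$ is an abelian subgroup of index $p$, and $\bar{G}' = G'/Z(G)$ has order $p^{m-3}$, so $|\bar{G} : \bar{G}'| = p^{2}$; in particular $\bar{G}$ is nonabelian because $m \geq 4$. By induction $\bar{G}$ is of maximal class, so $cl(\bar{G}) = m-2$. To finish I would invoke the standard fact that $cl(G/Z(G)) = cl(G) - 1$ for any nonabelian nilpotent $G$: writing $c = cl(G)$, one has $K_{c}(G) \leq Z(G)$ and $K_{c-1}(G) \not\leq Z(G)$ (otherwise $K_{c}(G) = [K_{c-1}(G),G] \leq [Z(G),G] = 1$), and passing these through $K_{i}(\bar{G}) = K_{i}(G)Z(G)/Z(G)$ gives $cl(\bar{G}) = c-1$. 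Hence $cl(G) = (m-2) + 1 = m-1$, as required.

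The one piece of real content is the order identity $|G| = p\,|G'|\,|Z(G)|$, which is being quoted, not reproved; everything after that is bookkeeping. Consequently the step needing the most care is simply the verification that $\bar{G}$ again satisfies all three hypotheses — especially the inclusion $Z(G) \leq G'$, on which the descent of $|G:G'| = p^{2}$ depends — together with getting the class/centre relation right in the last step.
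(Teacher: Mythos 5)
Your proof is correct and follows essentially the same route as the paper: induct on $|G|$, use the identity $|G|=p\,|G'|\,|Z(G)|$ to force $|Z(G)|=p$, and pass to $G/Z(G)$. You merely spell out details the paper leaves implicit (that $Z(G)\leq A$, that $Z(G)\leq G'$, and the relation $cl(G/Z(G))=cl(G)-1$), which is a welcome but not substantively different elaboration.
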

\begin{proof} We proceed by induction on $|G|$. One may assume that
$|G|>p^{3}$. We have $|Z(G)|=\frac{1}{p}|G:G'|=p$, so $Z(G)$ is a
unique minimal normal subgroup of $G$. Since $Z(G)<G'$, then wa have
$|G/Z(G):G'/Z(G)|=|G:G'|=p^{2}$. Hence, the quotient group $G/Z(G)$
is of maximal class by induction, and the result follows since
$|Z(G)|=p$.
\end{proof}

The previous result also holds if $G$ contains a subgroup of maximal
class and index $p$ (see \cite[Theorem 9.10]{Ber1}).

\begin{prop}
Let $G$ be a p-group of order $p^{4}$. Show that $G$ is of maximal
class if and only if $|G:G'|=p^{2}$.
\end{prop}
\begin{proof} Let $|G:G'|=p^{2}$. We have to prove that $G$ is
of class $3$. Assume that $cl(G)=2$. It follows from \cite[Lemma 65.1]{Ber2} that $G$ contains a nonabelian subgroup $B$ of order
$p^{3}$. By proposition \ref{CGB}, we have $G=BZ(G)$, then
$|G:G'|=p^{3}$, a contradiction.\end{proof}

\begin{prop}
Let $G$ be a $p$-group of order $p^{4}$ and exponent $p$. Prove that
if $G$ has no nontrivial direct factors then it is of maximal class.
\end{prop}
\begin{proof} Let $H$ be an $A_{1}$-subgroup of $G$; then
$|H|=p^{3}$. If $Z(G)\nleq H$, then $G=H\times C$, where $C<Z(G)$ is
of order $p$ such that $C\nleq H$. Thus, $Z(G)<H$ so $Z(G)$ is of
order $p$. Since $C_{G}(H)=Z(G)$, we get $C_{G}(H)<H$. Then, by
proposition \ref{CGB}, $G$ is of maximal class.\end{proof}

\begin{prop}
If $G$ is of order $p^{4}$ and exponent $p$. Prove that if $d(G)=2$
then $G$ is of maximal class.
\end{prop}
\begin{proof} Obviously, the group $G$ has an abelian subgroup
of index $p$. Since $G'=\Phi(G)$ and, by hypothesis, $|G:G'|=p^{2}$,
the result follows from Proposition \ref{AGG}. \end{proof}

\section{Fundamental subgroup of a $p$-group of maximal class}

\begin{lem}
Let $G$ be a $p$-group of maximal class and order $p^{m} $, $m>3$. For each $%
2\leq i\leq m-2 $, $M_{i}=C_{G}(K_{i}(G)/K_{i+2}(G)) $ is a maximal subgroup of $G$.
\end{lem}

\begin{proof} Indeed, $K_{i}(G)/K_{i+2}(G) $ is a noncentral normal subgroup of
order $p^{2}$ in ${G}/{K_{i+2}(G)}$. So by $N/C$-theorem, the quotient group $%
G\diagup M_{i}$ is isomorphic to a subgroup of $\ Aut(K_{i}(G)\diagup
K_{i+2}(G))$. But, a $p$-sylow subgroup of $\ Aut(K_{i}(G)\diagup K_{i+2}(G))$
has order $p$. Thus, $\left\vert G,M_{i}\right\vert =p$.
\end{proof}

The subgroup $M_{2}=C_{G}(K_{2}(G)/K_{4}(G)) $ plays distinguished role in
what follows; we denote it by $G_{1} $ and call the fundamental subgroup of $%
G$. In the following, if $G $ is of maximal class, then $G_{1}$ denotes
always the fundamental subgroup of $G$. We shall write $G_{i}$ instead $%
K_{i}(G)$ for all $i\geq 2$ when there is no possible confusion. We have $%
G\diagup G_{2}$ is elementary abelian of order $p^{2}$ and $\left[G_{i},G_{i+1}\right]=p$ for $1\leq i\leq n-1.$ Hence $\left\vert
G:G_{i}\right\vert =p^{i}$ for $1\leq i\leq n$.

\begin{prop}\label{charG}
Let $H$ and $K$ be two $p$-groups of maximal class. Let $\varphi:H\rightarrow K$ be an isomorphism, then $\varphi(H_{1})\subseteq K_{1}$.
\end{prop}
\begin{proof}  The subgroup $H_{1}$ is composed of the
elements $x\in H$ such that $[x,H_{2}]\leq H_{4}$. Let $x\in H_{1}$, since $\varphi(H_{2})=K_{2}$ and $\varphi(H_{4})=K_{4}$, it follows that
$$[\varphi(x),K_{2}]=[\varphi(x),\varphi(H_{2})]=\varphi([x,H_{2}])\leq \varphi(H_{4})=K_{4}.$$
Thus, $\varphi(x)\in K_{1}$. As required.
\end{proof}

\begin{cor}
Let $G$ be a $p$-group of maximal class. Then, $G_{1}$ is a characteristic subgroup of $G$.
\end{cor}
\begin{proof}
The corollary follows directly from the preceding proposition by taking $H=K$.
\end{proof}

\begin{rem}\label{Fund}\
\begin{enumerate}
  \item If $N$ is a normal subgroup of $G$ such that $|G/N|\geq p^{4}$, it
is clear from the definition that $(G/N)_{1}=G_{1}/N$.
  \item Let $G$ be a $p$-group of maximal class, $|G|> p^{4}$. Let $M$ be a maximal subgroup of $G$ and let $M_{1}$ be the
fundamental subgroup of $M$. Then $|G:M_{1}|=p^{2}$ and $M_{1}\lhd
G$ so $M_{1}=\Phi(G)<G_{1}$, and we get $M_{1}=G_{1}\cap M$.
\end{enumerate}
\end{rem}

\begin{lem}\cite[Theorem 9.6(e)]{Ber1}\label{Fund1}
Let $G$ be a group of maximal class and order $p^{m}$, $p > 2$, $m >
p+1$. Then the set of all maximal subgroups of $G$ is
$$\Gamma_{1}=\{M_{1}=G_{1}; M_{2}; ... ; M_{p+1}\}$$
Where $G_{1}$ is the fundamental subgroup of $G$, and the subgroups
$M_{2}; ... ; M_{p+1}$ are of maximal class.
\end{lem}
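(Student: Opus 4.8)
The plan is to count the maximal subgroups, check that $G_1$ is one of them, and then show that every \emph{other} maximal subgroup inherits from $G$ a lower central series long enough to be of maximal class.

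By Proposition~\ref{maxi}(3), $G$ has exactly $p+1$ maximal subgroups, so it is enough to prove that $G_1\in\Gamma_1$ and that the remaining $p$ of them are of maximal class. For the first point I would show $|G:G_1|=p$: by definition $G_1/K_4(G)=C_{G/K_4(G)}\bigl(K_2(G)/K_4(G)\bigr)$, and $K_2(G)/K_4(G)$ is a normal subgroup of order $p^{2}$ of $G/K_4(G)$ which is not central there (otherwise $[G,K_2(G)]=K_3(G)\le K_4(G)$, which is false). Applying the N/C-theorem inside $G/K_4(G)$ to this subgroup, which is isomorphic to $(\mathbb{Z}/p\mathbb{Z})^{2}$, and using that a Sylow $p$-subgroup of $\mathrm{Aut}\bigl((\mathbb{Z}/p\mathbb{Z})^{2}\bigr)=GL_2(\mathbb{F}_p)$ has order $p$, one gets that $|G/K_4(G):G_1/K_4(G)|$ divides $p$; being $>1$ it equals $p$, so $|G:G_1|=p$ and $\Gamma_1=\{G_1,M_2,\dots,M_{p+1}\}$.

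Next fix a maximal subgroup $M\neq G_1$; then $M\lhd G$, $|M|=p^{m-1}$ and $\Phi(G)=K_2(G)\le M$. From $G_1M=G$ we get $|G_1\cap M|=p^{m-2}<|M|$, so there is an element $s\in M\setminus G_1$. The crucial input is the uniserial action of an element outside the fundamental subgroup (this is where the hypothesis $m>p+1$ is used): $[K_i(G),s]=K_{i+1}(G)$ for $2\le i\le m-1$, with the convention $K_m(G)=1$. Granting this, the lower central series of $M$ can be computed from the bottom. For $K_2(M)=[M,M]$: the image of $M$ in $G/K_3(G)$, a group of order $p^{3}$, has order $p^{2}$, hence is abelian, so $[M,M]\le K_3(G)$; conversely $K_2(G)\le M$ and $s\in M$ give $K_3(G)=[K_2(G),s]\le[M,M]$, so $K_2(M)=K_3(G)$. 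Inductively, if $K_i(M)=K_{i+1}(G)$ with $2\le i\le m-2$, then $K_{i+1}(M)=[K_i(M),M]=[K_{i+1}(G),M]$ lies between $[K_{i+1}(G),s]=K_{i+2}(G)$ and $[K_{i+1}(G),G]=K_{i+2}(G)$, hence equals $K_{i+2}(G)$. So $K_i(M)=K_{i+1}(G)$ for $2\le i\le m-1$; in particular $K_{m-1}(M)=K_m(G)=1$ while $K_{m-2}(M)=K_{m-1}(G)=Z(G)\neq1$, whence $cl(M)=m-2$. Since $|M|=p^{m-1}$, Theorem~\ref{NIL}(1) forces $cl(M)\le m-2$, so $M$ is of maximal class, which completes the proof.

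The bookkeeping with normal subgroups (Proposition~\ref{maxi}) and the two ``squeezing'' steps are routine; the real obstacle is the uniserial-action statement $[K_i(G),s]=K_{i+1}(G)$ for $s\notin G_1$, which is the technical core of the subject and genuinely uses $m>p+1$. In a self-contained account I would establish it first or quote it from \cite{Bla1,Hup67,Ber1}. As an alternative one can induct on $m$: pass to $\bar G=G/Z(G)$, which is of maximal class of order $p^{m-1}$ with $(\bar G)_1=G_1/Z(G)$ by Remark~\ref{Fund}(1); then $\bar M=M/Z(G)\neq(\bar G)_1$ is of maximal class by induction once $m-1>p+1$, and it only remains to verify that the central extension by $Z(G)$ does not drop the class, that is $K_{m-2}(M)\neq1$, the few small values of $m$ being checked directly.
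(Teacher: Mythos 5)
First, a point of reference: the paper does not prove Lemma~\ref{Fund1} at all --- it is quoted verbatim from \cite[Theorem 9.6(e)]{Ber1} --- so there is no internal argument to measure you against; your write-up must stand on its own. The routine parts of it do stand: the count of maximal subgroups via Proposition~\ref{maxi}(3), the N/C-argument giving $|G:G_{1}|=p$ (note you neither need nor have justified that $K_{2}(G)/K_{4}(G)$ is elementary abelian; a Sylow $p$-subgroup of the automorphism group of \emph{either} group of order $p^{2}$ has order $p$, so the conclusion is unaffected), and the two squeezing steps showing $K_{i}(M)=K_{i+1}(G)$ for $2\leq i\leq m-1$ are all correct \emph{granting} your ``crucial input''.

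The genuine gap is precisely that input: $[K_{i}(G),s]=K_{i+1}(G)$ for every $s\notin G_{1}$ and $2\leq i\leq m-1$. This is not a routine lemma that can be deferred to a footnote; it is essentially Blackburn's theorem that for $m>p+1$ all two-step centralizers $C_{G}(K_{i}(G)/K_{i+2}(G))$ coincide with $G_{1}$, together with the consequence that $C_{G}(s)=\langle s\rangle Z(G)$ for $s\notin G_{1}$ --- i.e.\ the earlier parts of the very Theorem 9.6 of \cite{Ber1} whose part (e) you are proving, and the only place where the hypothesis $m>p+1$ enters (it genuinely fails for smaller $m$). So as a self-contained proof your proposal is missing its core; as a derivation from the literature it is legitimate, but then it does more work than necessary: once one quotes that $C_{G}(s)$ has order $p^{2}$ for $s\in M\setminus G_{1}$, one has $C_{G}(s)\leq M$ (since $Z(G)\leq\Phi(G)\leq M$), hence $C_{M}(s)=C_{G}(s)$ of order $p^{2}$, and Proposition~\ref{M.Suzuki} (or the corollary following it, already in the paper) gives immediately that $M$ is of maximal class, with no computation of the lower central series of $M$. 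Finally, your fallback induction on $m$ does not repair the gap: the base case is $m=p+2$, which is exactly where the full strength of the uniserial-action statement is needed and cannot be ``checked directly'', and the step ensuring that passing from $M/Z(G)$ back to $M$ does not drop the class is again left unproved.
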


\begin{prop}
Let a $p$-group $G$ of maximal class have order $p^{m} > p^{p+1}$. If
$H<G$ is of order $> p^{p}$ and $H\nleq G_{1}$, then $H$ is of
maximal class.
\end{prop}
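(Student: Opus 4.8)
The plan is to argue by induction on $m$, the two structural inputs being Lemma~\ref{Fund1} (every maximal subgroup of $G$ other than $G_{1}$ is itself of maximal class) and Remark~\ref{Fund}(2) (if $M\neq G_{1}$ is a maximal subgroup of $G$, then the fundamental subgroup $M_{1}$ of $M$ equals $G_{1}\cap M$). The second fact is what lets the hypothesis ``$H\not\leq G_{1}$'' be transferred to ``$H\not\leq M_{1}$'' when one passes to a maximal subgroup $M$ containing $H$, and it is the point on which the whole argument hinges.

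First I would record a reduction coming from the maximality of $G_{1}$. Since $H$ is a proper subgroup of $G$ it lies in some maximal subgroup $M$ of $G$; as $H\not\leq G_{1}$ (so $HG_{1}=G$), this $M$ cannot be $G_{1}$, hence $M$ is of maximal class by Lemma~\ref{Fund1}, and $|M|=p^{m-1}$. Next I would split into two cases according to whether $H$ is already maximal in $G$. If $|H|=p^{m-1}$, then $H$ is one of the maximal subgroups $M_{2},\dots,M_{p+1}$ listed in Lemma~\ref{Fund1}, so $H$ is of maximal class and we are done (this uses $m>p+1$). If instead $|H|\leq p^{m-2}$, then from $p^{p}<|H|\leq p^{m-2}$ one gets $m\geq p+3$, so $|M|=p^{m-1}>p^{p+1}$ and $M$ is a group of maximal class to which the inductive hypothesis applies; moreover $H<M$ properly. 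By Remark~\ref{Fund}(2), $M_{1}=G_{1}\cap M$, so $H\not\leq G_{1}$ forces $H\not\leq M_{1}$, while $|H|>p^{p}$ still holds; applying the inductive hypothesis to $M$ in place of $G$ yields that $H$ is of maximal class. The base case $m=p+2$ is covered by the first case alone, since then $p^{p}<|H|<p^{m}$ forces $|H|=p^{m-1}$.

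The only non-routine point, hence the step I expect to be the real obstacle, is the inheritance of the property ``not contained in the fundamental subgroup'' under passage to the maximal subgroup $M$: one must know that the fundamental subgroup of $M$ is exactly $G_{1}\cap M$, which is precisely Remark~\ref{Fund}(2) (legitimate since $M\neq G_{1}$ is of maximal class of order $>p^{4}$), combined with the trivial inclusion $H\leq M$. Everything else is arithmetic on subgroup indices to make sure the inductive hypothesis is applicable, and throughout one keeps the standing assumption $p>2$ inherited from Lemma~\ref{Fund1}.
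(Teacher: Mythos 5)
Your proof is correct and follows essentially the same route as the paper: induction on $m$ with base case $m=p+2$ handled by Lemma~\ref{Fund1}, and the inductive step passing to a maximal subgroup $M\neq G_{1}$ containing $H$ and using Remark~\ref{Fund}(2) to identify $M_{1}=G_{1}\cap M$ so that $H\nleq M_{1}$. Your explicit case split on whether $H$ is maximal in $G$ merely makes precise the index arithmetic that the paper leaves implicit when it applies induction to the pair $H<M$.
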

\begin{proof} We proceed by induction on $m$. If $m=p+ 2$, the result follows
from Lemma \ref{Fund1} (indeed, then all members of the set
$\Gamma_{1}-{G_{1}}$ are of maximal class). Now let $m > p + 2$ and
let $H\leq M\in \Gamma_{1}$, then $M$ is of maximal class (Lemma
\ref{Fund1}). The subgroup $M_{1}=M\cap G_{1}$ is the fundamental
subgroup of $M$ (Remark \ref{Fund}(2)). As $H\nleq M_{1}$, then $H$
is of maximal class, by induction, applied to the pair $H<M$.
\end{proof}

\begin{prop}
Let $G$ be of maximal class and order $>p^{p+1}$. If $H<G$ is of
order $>p^{2}$, then either $H\leq G_{1}$ or $H$ is of maximal.
class.
\end{prop}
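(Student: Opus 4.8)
\textit{Proof proposal.} We may assume $p>2$: when $p=2$ we have $p^{p}=p^{2}$, so the statement coincides with the preceding proposition. Likewise the alternative $H\le G_{1}$ needs no argument, so assume $H\nleq G_{1}$; we must show that $H$ is of maximal class. Since this is precisely the content of the previous proposition when $|H|>p^{p}$, we may assume in addition that $p^{2}<|H|\le p^{p}$.

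The natural line is induction on $|G|$. Fix a maximal subgroup $M$ of $G$ containing $H$. As $H\nleq G_{1}$ we have $M\ne G_{1}$, so $M$ is of maximal class by Lemma \ref{Fund1} (note that $|G|>p^{p+1}$ forces $|G|=p^{m}$ with $m>p+1$), and by Remark \ref{Fund}(2) the fundamental subgroup of $M$ is $M\cap G_{1}$ (here $|G|>p^{p+1}\ge p^{4}$). Since $M\cap G_{1}\le G_{1}$, we conclude $H\nleq M_{1}$, where $M_{1}$ denotes the fundamental subgroup of $M$. If $|M|>p^{p+1}$, i.e. $|G|\ge p^{p+3}$, then the pair $(M,H)$ satisfies all the hypotheses of the proposition with $|M|<|G|$, so the induction hypothesis yields that $H$ is of maximal class. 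Thus everything comes down to the base case $|G|=p^{p+2}$, equivalently $|M|=p^{p+1}$, in which no descent to a proper subgroup of maximal class is available.

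For that base case I would argue directly via the Corollary that a $p$-group possessing an element whose centralizer has order $p^{2}$ is of maximal class. Choose $x\in H\setminus G_{1}$. Here I would invoke the standard structural fact about $p$-groups of maximal class of order $\ge p^{4}$ that every element outside the fundamental subgroup has centralizer of order exactly $p^{2}$; concretely $G=\langle x\rangle G_{1}$, so $C_{G}(x)=\langle x\rangle\,C_{G_{1}}(x)$, and the action of $\langle x\rangle$ on the chain $G_{1}>G_{2}>\dots>1$ (whose factors are $G$-central above $G_{2}$) fixes only $Z(G)$ inside $G_{1}$, while $x$ has order $\le p^{2}$ with $x^{p}\in Z(G)$, so $|C_{G}(x)|=p^{2}$. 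Then $C_{H}(x)=H\cap C_{G}(x)$ has order $\le p^{2}$; and since $|H|>p^{2}$ the element $x$ is non-central in $H$, so $\langle x\rangle Z(H)\le C_{H}(x)$ already has order $\ge p^{2}$. Hence $|C_{H}(x)|=p^{2}$ and $H$ is of maximal class. The crux — and the only ingredient not already in the excerpt — is this centralizer fact; I would isolate it as a lemma (it also makes the ``only if'' half of the Corollary effective). Note that, once granted, this last paragraph applies verbatim to any $G$ of order $\ge p^{4}$, so it even renders the induction of the second paragraph superfluous; an alternative staying closer to the quoted results would be to run the downward induction all the way to order $p^{4}$, which requires knowing that in a maximal-class group of order $p^{l}$ with $4\le l\le p+1$ every maximal subgroup other than the fundamental one is of maximal class — for $l=4$ this is just the observation that an abelian maximal subgroup must equal $C_{G}(G')=G_{1}$ — but proving this for $l$ close to $p+1$ appears to need the same structural input.
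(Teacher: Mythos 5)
Your route is genuinely different from the paper's. The paper inducts on $|H|=p^{k}$: the base case $k=3$ is quoted from Blackburn \cite{Bla1}, and for $k>3$ it observes that every maximal subgroup of $H$ other than $H\cap G_{1}$ is again not contained in $G_{1}$ (since $H\cap G_{1}$ is itself maximal in $H$), hence is of maximal class by the induction hypothesis; the number of members of $\Gamma_{1}(H)$ of maximal class is therefore at least $|\Gamma_{1}(H)|-1\not\equiv 0\pmod{p^{2}}$, and a counting criterion \cite[Theorem 12.12(c)]{Ber1} then forces $H$ to be of maximal class. You instead descend on $|G|$ through the chain of fundamental subgroups --- and that part is correct: $H\nleq M\cap G_{1}=M_{1}$ by Remark \ref{Fund}(2), and $|M|>p^{p+1}$ persists until $|G|=p^{p+2}$ --- and you finish with a centralizer argument via the corollary to Proposition \ref{M.Suzuki}. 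Each proof imports one external theorem: the paper uses Blackburn's $k=3$ case plus the counting criterion, you use a single structural fact about $C_{G}(x)$ for $x\in G\setminus G_{1}$; and, as you observe, once that fact is available for $G$ itself the induction evaporates.

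The one genuine problem is the status of that fact. As you state it (``every element outside the fundamental subgroup of a maximal-class group of order $\geq p^{4}$ has centralizer of order exactly $p^{2}$'') it is false: there exist maximal-class groups of order $p^{m}$ with $4<m\leq p+1$ in which $C_{G}(Z_{2}(G))\neq G_{1}$, and any $x\in C_{G}(Z_{2}(G))\setminus G_{1}$ then satisfies $|C_{G}(x)|\geq |\langle x\rangle Z_{2}(G)|\geq p^{3}$. Your sketched justification also does not prove it: that the factors $G_{i}/G_{i+1}$ are $G$-central for $i\geq 2$ means $x$ acts \emph{trivially} on each of them, which is no obstruction to $C_{G_{1}}(x)$ being large. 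What is actually needed is that $x$ lies outside every $M_{i}=C_{G}(G_{i}/G_{i+2})$, i.e.\ that $M_{i}=G_{1}$ for all $i$, together with $x^{p}\in Z(G)$; this is exactly what the hypothesis $m>p+1$ (positive degree of commutativity) buys, and it is where the real work sits (see \cite[Theorem 9.6]{Ber1} or Huppert \cite{Hup67}, III.14). Fortunately you only ever apply the fact when $|G|\geq p^{p+2}$, where it is a bona fide theorem, so your argument closes once you cite it with the correct hypothesis $|G|>p^{p+1}$ rather than ``$\geq p^{4}$''; but the lemma you propose to isolate must carry that restriction, and its proof is not the two lines you indicate.
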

\begin{proof} Let $|H|=p^{k}$. The result is known if $k=3$ \cite{Bla1}.
Assuming that $k>3$, we use induction on $k$. Then all maximal
subgroups of $H$ which $\neq H\cap G_{1}$ are of maximal class, by
induction. Then the set $\Gamma_{1}(H)$ contains exactly
$|\Gamma_{1}(H)|-1\neq 0(mod p^{2})$ members of maximal class so $H$
is of maximal class, by \cite[Theorem 12.12(c)]{Ber1}.
\end{proof}

\begin{prop}\cite[Exercise 9.28]{Ber1}
Let $G$ be a $p$-group of maximal class and order $>p^{p+1}$. Show the
following:
\begin{enumerate}
  \item $exp(G_{1})=exp(G)$.
  \item If $x\in G$ is of order $\geq p^{3}$, then $x\in G_{1}$.
\end{enumerate}
\end{prop}

\section{$CGZ$-group of exponent $p$}

Let $p$ be a prime number and $G$ be a finite nonabelian p-group. The
group $G$ is called a $CGZ$-group if and only if every nonabelian subgroup $H
$ of $G$ satisfies $C_{G}(H)=Z(H)$. In this section, we prove that any $%
CGZ$-group of order $p^{n}$ and exponent $p$ (where $3\leq n\leq p$)
admits a unique characteristic elementary abelian subgroup of index $p$.

\begin{prop}
Let $G$ be a $p$-group of order $p^{4}$ and exponent $p$. If $G$ has no nontrivial direct factors then it is a $CGZ$-group.
\end{prop}
\begin{proof} Indeed, if $G$ is minimal nonabelian then it is clearly a $CGZ$-group. Else, let $H$ be a proper nonabelian subgroup of $G$, then
$|H|=p^{3}$. Now, let $x\in C_G(H)$. If $x\notin H$ then $G=\langle x\rangle\cdot H$, a contradiction. So, $C_{G}(H)<H$ and then $C_{G}(H)=Z(H)$. As required.
\end{proof}

\begin{lem}\label{lemCGZ}
Let $G$ be a nonabelian $p$-group and let $N$ be a nonabelian subgroup of $G$. If $G$ is a $CGZ$-group, then $N$ is a $CNZ$-group.
\end{lem}

\begin{proof}
Let $H$ be a nonabelian subgroup of $N$. Since $G$ is a $CGZ$-group, it
follows that $C_{N}(H)=N\cap C_{G}(H)=N\cap Z(H)=Z(H)$. So $N$ is a $CNZ$-group.
\end{proof}

\begin{prop}\cite[Proposition 2.2]{XLC14}\label{CGZ3}
Let $G$ be a nonabelian $p$-group of exponent $p$, where $p\geq 3$. Then $G$ is a $CGZ$-group if and only if $G$ is a finite $p$-group of maximal
class, and there is an abelian subgroup of index $p$.
\end{prop}

\begin{prop}
\label{CGZ2} Let $G$ be a $p$-group of order $p^{n}> p^{4}$ and exponent $p$%
. Then $G$ is a $CGZ$-group if and only if all proper nonabelian subgroups
of $G$ have maximal  class.
\end{prop}

\begin{proof} Let $H$ be a nonabelian subgroup of $G$. If $G$
is a $CGZ$-group, by Lemma \ref{lemCGZ}, $H$ is a $CHZ$-group, then
it is of maximal class, by Proposition \ref{CGZ3}. Conversely, let $H$ be a minimal nonabelian subgroup of $G$,
by \cite[Lemma 136.2(ii)]{Ber3}, we have $|H|=p^{3}$. Now, if $x\in C_G(H)$ then $K=\langle x\rangle\cdot H$ is a nonabelian subgroup of $G$ and then it is of maximal class. But $Z(H)\leq Z(K)$ and $|Z(H)|=|Z(K)|=p$, so $x\in Z(H)$. Thus $C_G(H)\leq H$ and then $G$ is of maximal class, by Proposition \ref{CGB}. Let $R$ be a normal subgroup of order $p^{2}$. By $N/C$-theorem, we have $\left\vert G,C_{G}(R)\right\vert =p$. If $C_{G}(R)$ is not abelian, by hypothesis, it is of maximal class and then
$\left\vert Z\left( C_{G}(R)\right) \right\vert =p$. But $R\leq Z\left(
C_{G}(R)\right) $, a contradiction. So $C_{G}(R)$ is abelian and maximal in $%
G$. Therefore, by Proposition \ref{CGZ3}, $G$ is a $CGZ$-group.
\end{proof}

\begin{prop} \cite[Proposition 2.6]{XLC14}\label{CGZ4}
Let $G$ be a finite $p$-group of maximal class. If $G$ is a $CGZ$-group, then $G'$ is abelian.
\end{prop}

\begin{rem}
Let $G$ be a $CGZ$-group of order $p^{n}$ and exponent $p$, by Proposition \ref{CGZ4}, $G^{\prime }$ is abelian, then $%
G_{1}=C_{G}(G_{i}\diagup G_{i+2})$ with $1\leq i\leq n-2$ (see \cite[Theorem III.14.11]{Hup67}).
\end{rem}

Hence we have the following interesting corollary:

\begin{cor}
\label{CGZ}Let $G$ be a $CGZ$-group of order $p^{n}$ and exponent $p$, where
$p\geq 3$. Then $G$ possesses a unique characteristic elementary abelian
subgroup of index $p$. In this case, we have necessary $n\leq p$.
\end{cor}

\begin{proof} Let $K=C_{G}(Z_{2}(G))$. By proposition \ref%
{CGZ2}, we easily see that $K$ is an abelian subgroup of $G$. By the above,
we get $K=G_{1}$, then $K$ is the unique characteristic elementary abelian
subgroup of index $p$. Since $G$ has exponent $p$, it is a regular p-group (see \cite[Theorem 7.1(b)]{Ber1}). Therefore, by a theorem of Blackburn (\cite[Theorem III.14.21]{Hup67}) or \cite[Theorem 9.5]{Ber1}), we obtain $|G|\leq p^{p}$.\end{proof}

\bibliographystyle{plain}

\end{document}